\numberwithin{equation}{section} 
\numberwithin{figure}{section} 
\theoremstyle{plain}
\theoremstyle{plain}
\newtheorem{thm}{Theorem}
  \theoremstyle{plain}
  \newtheorem{cor}[thm]{Corollary}
  \theoremstyle{definition}
  \newtheorem{defn}[thm]{Definition}
  \theoremstyle{remark}
  \newtheorem*{acknowledgement*}{Acknowledgement}
  \theoremstyle{plain}
  \newtheorem{lem}[thm]{Lemma}
  \theoremstyle{plain}
  \newtheorem*{algorithm*}{Algorithm}
  \theoremstyle{definition}
  \newtheorem*{problem*}{Problem}
\newcommand{\sys}[1]{X_{#1}}
\begin{document}

\title{A note on universality in multidimensional symbolic dynamics }

\author{Michael Hochman}

\subjclass[2000]{37B15, 37B40, 37B50, 94A17, 03D45}

\curraddr{Fine Hall, Washington Road, Princeton University, Princeton, NJ 08544}

\email{hochman@math.princeton.edu}
\begin{abstract}
We show that in the category of effective $\mathbb{Z}$-dynamical
systems there is a universal system, i.e. one that factors onto every
other effective system. In particular, for $d\geq3$ there exist $d$-dimensional
shifts of finite type which are universal for $1$-dimensional subactions
of SFTs. On the other hand, we show that there is no universal effective
$\mathbb{Z}^{d}$-system for $d\geq2$, and in particular SFTs cannot
be universal for subactions of rank $\geq2$. As a consequence, a
decrease in entropy and Medvedev degree and periodic data are not
sufficient for a factor map to exists between SFTs.

We also discuss dynamics of cellular automata on their limit sets
and show that (except for the unavoidable presence of a periodic point)
they can model a large class of physical systems.
\end{abstract}
\maketitle
\markboth{Michael Hochman}{Universality in symbolic dynamics}

\section{Introduction}

\subsection{Universality for shifts of finite type}

A basic problem about any class of dynamical systems is to understand
the factoring relation between its members. Much of ergodic theory
and topological dynamics, and particularly the theory of one-dimensional
shifts of finite type (SFTs), has been motivated by the hope, which
for some classes is satisfied, that the factoring relation reduce
to some simple parameter, such as entropy, periodic point data or
spectrum. For higher dimensional SFTs, which are the main subject
of this note, partial results are known under certain mixing conditions
\cite{JK05}, but it has become progressively clearer that the invariants
which dictate the factoring relation in dimension $1$ are only a
part of the picture in dimensions $d>1$.

We begin by reviewing some definitions; see also section \ref{sec:Topological-and-symbolic-dynamics}.
A $\mathbb{Z}^{d}$ \emph{shift of finite type }(SFT) is a subshift
$X$ of the full $d$-dimensional shift $\Sigma^{\mathbb{Z}^{d}}$
over $\Sigma$, defined by excluding all configurations containing
patterns from some fixed finite set. More precisely, by a ($d$-dimensional)
pattern we mean a coloring of a finite subset of $\mathbb{Z}^{d}$.
For $F\subseteq\mathbb{Z}^{d}$ and $a\in\Sigma^{F}$, we say that
the pattern $a$ appears in a configuration $x\in\Sigma^{\mathbb{Z}^{d}}$
if $(T_{u}x)|_{F}=a$ for some $u\in\mathbb{Z}^{d}$; here, $T^{u}$
is the shift by $u$. For $L$ a set of $d$-dimensional patterns
over $\Sigma$, set \[
\sys{L}=\{x\in\Sigma^{\mathbb{Z}^{d}}\,:\,\textrm{no element of }L\textrm{ appears in }x\}\]
An SFT is a subset of the form $\sys{L}$ for a\emph{ finite} set
$L$. 

A \emph{subaction }(or subdynamic) of a $\mathbb{Z}^{d}$-SFT is the
restriction of the $\mathbb{Z}^{d}$-action to a subgroup $\mathbb{Z}^{k}<\mathbb{Z}^{d}$.
Thus the full action and its subactions share the same phase space,
which is a disconnected compact metric space, but it is important
to note that the subactions are not necessarily symbolic. For example,
if $\mathbb{Z}<\mathbb{Z}^{2}$ as the first component, then the $\mathbb{Z}$-subaction
of the full $\mathbb{Z}^{2}$-shift $\{0,1\}^{\mathbb{Z}^{2}}$ is
isomorphic to the $\mathbb{Z}$-shift over the Cantor set $\{0,1\}^{\mathbb{Z}}$,
which is not expansive; this may be seen by thinking of columns of
symbols as points in the Cantor set, so each 2-dimensional configuration
becomes a linear sequence of points in the Cantor set and the subaction
shifts these points. In particular a subshift of an SFT is may no
longer be an SFT; a finite-infinite subgroup always gives an SFT but
even the conditions under which a general subaction of an SFT is expansive
are not known. We use the unqualified term SFT to refer to an SFT
with the full action.

Returning to our subject, in this paper we consider a basic question
about the factoring relation for SFTs, namely, whether there is a
universal SFT that factors onto all others. We are actually interested
in the broader question of universality for the class of subactions
of SFTs. More precisely, for each $k\leq d$, we ask whether there
is a $\mathbb{Z}^{d}$-SFT $X$ so that the $\mathbb{Z}^{k}$-subaction
on $X$ factors onto the $\mathbb{Z}^{k}$-subaction of every other
$\mathbb{Z}^{d}$ SFT. Such an $X$, if it exists, we call a $(d,k$)-universal
SFT.

One can immediately rule out the existence of $(d,d)$-universal SFTs
on the basis of topological entropy, which does not increase upon
passage to a factor, and so, since every SFT has finite entropy but
there are SFTs of arbitrarily large entropy, no universal one can
exist. 

However, for $k<d$ it is not clear what one should expect. As we
saw in the case of the full shift, a subaction may have infinite entropy,
so this poses no restriction. There is a restriction of a recursive
nature, and that is that the subaction of an SFT is effective. Recall
that an \emph{effective $\mathbb{Z}^{k}$-dynamical system} (EDS)
is a subshift of the full $\mathbb{Z}^{k}$-shift over the Cantor
set whose complement is the union of a formally computable sequence
of basic open sets. The fact that a subaction of an SFT is effective
was proved in \cite{H07}, along with a partial converse: any effective
$\mathbb{Z}^{k}$ system can be realized as the factor of a $\mathbb{Z}^{k}$-subaction
of a $\mathbb{Z}^{k+2}$-SFT (in fact the extension can be made quite
small, but we will not use this). We refer to \cite{H07} for further
details.

Thus, the questions of whether $(d,k)$-universal SFTs exist is closely
related to the existence of universal systems in the class of effective
systems;%
\footnote{One must be careful what notion of morphism one chooses for effective
systems, since not every factor map is an effective factor map. However
in this paper both definitions lead to the same results, since a factor
map from an EDS to a symbolic system is automatically effective.%
} and the class of effective systems, though countable, includes essentially
every type of dynamics we can {}``describe''. This would seem to
indicate that we should not expect universal dynamics to exist, since
they do not for general systems (at least if we stay in the context
of separable spaces).

Another relevant piece of information was recently provided by S.
Simpson \cite{S07}, who introduced Medvedev degrees as an invariant
of SFTs. Simpson associates to each SFT $X$ the Medvedev degree $m(X)$
of its phase space, which is a measure of the recursive-theoretic
complexity of the phase space as a subset of the full shift without
reference to the dynamics (see section \ref{sec:Recursive-set-and-effective-dynamics}).
Since factor maps between SFTs are sliding block codes they are computable,
and therefore do not increase Medvedev degree. Simpson also showed
that every Medvedev degree is realized as a 2-dimensional SFT. It
follows that the factoring relation between SFTs is at least as complicated
as the order relation between Medvedev degrees, and the latter is
still not well understood. 

What is relevant to our question, however, is that there exists a
maximal Medvedev degree. Thus, from the point of view of recursion
theory, there is no obstruction to the existence of SFTs whose (sub)actions
factor onto a very broad class of systems; indeed, any set with maximal
Medvedev degree at least maps (via a computable function) into, every
effective set, and so into every SFT (this map has nothing to do with
dynamics, but even so its existence is non-trivial).

It turns out that the existence of universal effective systems depends
on the rank of the action. For $\mathbb{Z}$-actions, such a system
exists:
\begin{thm}
\label{thm:main-positive}There exists a universal effective $\mathbb{Z}$-system,
that is, an effective $\mathbb{Z}$-system that factors onto every
other effective $\mathbb{Z}$-system.

In particular, for every $d\geq3$ there exist $\mathbb{Z}^{d}$-SFTs
whose $\mathbb{Z}$-subaction factors onto the $\mathbb{Z}$-subaction
of every other $\mathbb{Z}^{d}$-SFT; i.e. there are $(d,1)$-universal
SFTs for every $d\geq3$.
\end{thm}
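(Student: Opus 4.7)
My plan is to construct the universal effective $\mathbb{Z}$-system $X$ as a ``diagonal joining'' of every nonempty effective $\mathbb{Z}$-system. The second assertion follows immediately from the first by invoking the realization result of \cite{H07}: the universal $X$ is a factor of the $\mathbb{Z}$-subaction of some $\mathbb{Z}^{3}$-SFT $Z$, and that $\mathbb{Z}$-subaction then factors onto every effective $\mathbb{Z}$-system, in particular onto the $\mathbb{Z}$-subaction of any other $\mathbb{Z}^{3}$-SFT (which is effective by the same result); for $d>3$ one obtains $(d,1)$-universal SFTs from $Z$ by adjoining trivial directions.

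The construction of $X$ exploits the fact that effective $\mathbb{Z}$-systems form a countable class which admits a uniform enumeration $(Y_{n})_{n\geq1}$, parameterized by Turing machines enumerating the basic open cylinders comprising the complement of each $Y_n$ in $C^{\mathbb{Z}}$. After a preliminary reduction to ensure each $Y_{n}$ is nonempty (the main obstacle, discussed below), work inside the shift space $(C^{\mathbb{N}})^{\mathbb{Z}}$, which is computably and shift-equivariantly homeomorphic to $C^{\mathbb{Z}}$ via the standard identification $C^{\mathbb{N}}\cong C$. Writing $x_{k}=(x_{k}^{(1)},x_{k}^{(2)},\ldots)\in C^{\mathbb{N}}$, define
\[
X=\{x\in(C^{\mathbb{N}})^{\mathbb{Z}}\,:\,(x_{k}^{(n)})_{k\in\mathbb{Z}}\in Y_{n}\text{ for every }n\geq1\}.
\]
Three verifications complete the core argument: (i) $X$ is closed and shift-invariant as the intersection of the closed shift-invariant constraints $\pi_{n}^{-1}(Y_{n})$, where $\pi_{n}(x)_{k}=x_{k}^{(n)}$; (ii) $X$ is effective, since its complement is $\bigcup_{n}\pi_{n}^{-1}(C^{\mathbb{Z}}\setminus Y_{n})$, which is a computable union of basic open cylinders by the uniform effectiveness of the enumeration $(Y_{n})$; (iii) for each $n$, the projection $\pi_{n}:X\to Y_{n}$ is a continuous shift-equivariant surjection, the surjectivity being witnessed by extending any $y\in Y_{n}$ to a point of $X$ by inserting arbitrary points of the other $Y_{m}$'s (which exist by nonemptiness) in the remaining coordinates. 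Hence $X$ factors onto every $Y_{n}$ and is a universal effective $\mathbb{Z}$-system.

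The hard part --- and where most of the technical work would go --- is the preliminary reduction. Nonemptiness of an effective subshift of $C^{\mathbb{Z}}$ is not decidable, so a naive enumeration by Turing machines may include empty systems, which would render $X$ empty. My strategy is to enlarge each $Y_{n}$ to an effective extension $Y_{n}'$ that is guaranteed to be nonempty, for example by adjoining to each $Y_{n}$ a distinguished constant sequence (a fixed point) to obtain $Y_{n}'=Y_{n}\cup\{c_{n}^{\mathbb{Z}}\}$, establishing universality over the enlarged class $(Y_{n}')$, and then checking that every effective $\mathbb{Z}$-system $Y$ is a factor of some $Y_{n}'$ so that universality transfers back. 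The delicate point is the factor-back step, which in full generality requires care about whether $Y$ itself contains a fixed point (so that the adjoined sequence can be mapped equivariantly); when no fixed point is present one must use a more elaborate enlargement, for instance by adjoining a periodic orbit or by invoking the \cite{H07}-style encoding machinery to embed $Y$ as a genuine factor of a fixed-point-bearing effective extension. Once this reduction is handled uniformly in $n$, the joining construction above goes through and yields the universal $X$.
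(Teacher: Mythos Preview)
Your overall architecture---taking the product of an effective enumeration of nonempty effective $\mathbb{Z}$-systems and projecting---is exactly the paper's strategy, and your verifications (i)--(iii) are fine. The second assertion via \cite{H07} is also handled correctly.

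The gap is in your ``preliminary reduction.'' Adjoining a fixed point (or a periodic orbit) to each $Y_n$ does guarantee nonemptiness, but the factor-back step $Y_n'\to Y_n$ then forces $Y_n$ to contain a fixed point (respectively, a periodic point of suitable period), and there are nonempty effective $\mathbb{Z}$-systems with no periodic points at all---any minimal infinite effective subshift, e.g.\ a Sturmian shift, is an example. So the adjunction approach cannot work uniformly, and your fallback gesture toward ``\cite{H07}-style encoding machinery'' does not supply a concrete mechanism.

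The paper resolves this with a different idea that is specific to dimension one. While emptiness of an effective subshift (given by an infinite RE list $L$ of forbidden patterns) is undecidable, emptiness of $\Omega_{L'}$ for a \emph{finite} $L'\subseteq\mathcal{P}^{*1}$ \emph{is} decidable: it reduces to detecting a cycle in a finite graph, as for ordinary $\mathbb{Z}$-SFTs. One therefore takes the naive enumeration $L'_n$ (patterns on which the $n$-th machine halts), forms finite approximations $L_{n,k}\nearrow L'_n$, and sets
\[
L_n=\bigcup\{\,L_{n,k}:\Omega_{L_{n,k}}\neq\emptyset\,\}.
\]
The emptiness test inside the union is decidable, so $L_n$ is uniformly RE. If $\Omega_{L'_n}\neq\emptyset$ then all approximations are nonempty and $L_n=L'_n$; if $\Omega_{L'_n}=\emptyset$ then by compactness the union truncates at some finite stage and $\Omega_{L_n}$ is still nonempty. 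Crucially, in the nonempty case one gets $\Omega_{L_n}=Y_n$ \emph{on the nose}, so the factor map is the identity and no periodic-point issue arises. This decidability-of-finite-stage-emptiness is the missing ingredient in your proposal, and is precisely what fails for $d\ge2$ (Berger's theorem), explaining why the construction does not generalize.
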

It remains an open problem whether there exist $(2,1)$-universal
SFTs, i.e. $\mathbb{Z}^{2}$-systems whose $1$-dimensional subactions
is universal. This would follow if every $\mathbb{Z}$-EDS could be
realized as the subaction of a $\mathbb{Z}^{2}$-SFT; this is problem
open \cite{H07}.

The universal effective $\mathbb{Z}$-system in the theorem is constructed
by taking the product of all non-empty $\mathbb{Z}$-EDS (this is
a countable product so no topological difficulties arise). The fact
that the non-empty $\mathbb{Z}$-EDS can be effectively enumerated
rests on the fact that one can decide whether a $\mathbb{Z}$-SFT
is empty based on the list of forbidden patterns which defines it.
In contrast, it is a classical result of Berger that this problem
is formally undecidable for $\mathbb{Z}^{2}$-SFTs \cite{R71,B66},
so this construction cannot be imitated in higher dimensions. This
is not a shortcoming of the method because
\begin{thm}
\label{thm:main-negative}If $d\geq2$ then there is no universal
effective $\mathbb{Z}^{d}$ system, and there are no $(d,k)$-universal
SFTs for $k\geq2$.
\end{thm}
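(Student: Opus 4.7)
My plan is to derive a contradiction with Berger's theorem on the undecidability of emptiness for $\mathbb{Z}^{d}$-SFTs ($d \geq 2$). The key observation is that emptiness of an SFT $\sys{L}$ is already recursively enumerable --- by compactness, $\sys{L} = \emptyset$ iff some finite box in $\mathbb{Z}^{d}$ admits no $L$-avoiding pattern. So a universal system would furnish an r.e. certification of non-emptiness, rendering emptiness decidable and yielding the desired contradiction.

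For the first clause, let $X$ be a universal effective $\mathbb{Z}^{d}$-system; then $X$ is non-empty, and for any $\mathbb{Z}^{d}$-SFT $Y$ with finite alphabet $\Sigma$ and finite forbidden list $L$, universality says $Y \neq \emptyset$ iff there exists a continuous $\mathbb{Z}^{d}$-equivariant $\pi : X \to \Sigma^{\mathbb{Z}^{d}}$ with $\pi(X) \subseteq Y$ (forward: the universal factor map; reverse: $\pi(X)$ is then a non-empty subshift of $Y$). These candidate maps are computably enumerable: by a Curtis-Hedlund-Lyndon style argument, each $\pi$ is determined by a finite window $F \subseteq \mathbb{Z}^{d}$ and a continuous local rule $\phi : K^{F} \to \Sigma$, where $K$ is the Cantor alphabet of $X$; since $\Sigma$ is discrete and $K^{F}$ is compact, $\phi$ is locally constant, hence specified by a finite clopen partition with labels. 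Moreover, the property $\pi(X) \subseteq Y$ is itself r.e.: by $\mathbb{Z}^{d}$-equivariance it reduces, for each forbidden pattern $p \in L$, to disjointness of the clopen set $\{x \in K^{\mathbb{Z}^{d}} : \pi(x)|_{\mathrm{dom}(p)} = p\}$ from the $\Pi^{0}_{1}$ class $X$, and such disjointness is r.e. by compactness (enumerate basic clopens of $X^{c}$ and wait for them to cover the target clopen). Enumerating candidates and waiting for all the finitely many forbidden patterns of $Y$ to be certified gives an r.e. test for $Y \neq \emptyset$, completing the contradiction.

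For the $(d,k)$-universal SFT statement with $k \geq 2$, the case $d = k$ is excluded by finite topological entropy, exactly as for the $(d,d)$-problem discussed in the introduction. For $d \geq k+2$, the realization theorem of \cite{H07} says every effective $\mathbb{Z}^{k}$-system is a factor of the $\mathbb{Z}^{k}$-subaction of some $\mathbb{Z}^{k+2}$-SFT; padding with free extra dimensions preserves the SFT property and lifts this to $\mathbb{Z}^{d}$-SFTs, so the $\mathbb{Z}^{k}$-subaction of a $(d,k)$-universal SFT would be a universal effective $\mathbb{Z}^{k}$-system, contradicting the first clause. The borderline case $d = k+1$ is where I expect the main obstacle: the analogous codimension-one realization is open (as the paper flags in the $(2,1)$-case), so no direct reduction is available. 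My plan is to rerun the enumeration argument directly for factor maps $\pi : X^{\mathbb{Z}^{k}} \to Y^{\mathbb{Z}^{k}}$ between subactions of two $\mathbb{Z}^{k+1}$-SFTs, encoding such a $\pi$ as a countable family of finite-alphabet local rules indexed by a fundamental domain of $\mathbb{Z}^{k+1}/\mathbb{Z}^{k} \cong \mathbb{Z}$; the difficulty is that $\pi$ now depends on infinitely many local rules simultaneously, and the crux is showing that a finite prefix of this data, together with the SFT structure of $X$, already certifies $\pi(X) \subseteq Y$ --- yielding an r.e. semi-decision for non-emptiness of $\mathbb{Z}^{k+1}$-SFTs and the same contradiction with Berger (valid since $k+1 \geq 3$).
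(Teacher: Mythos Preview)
Your argument for the first clause (no universal effective $\mathbb{Z}^{d}$-system for $d\geq 2$) is essentially the paper's: enumerate clopen partitions of the putative universal system, and for each one semi-decide whether it induces a map into the target SFT, thereby recursively enumerating non-emptiness and contradicting Berger. That part is fine.

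The genuine gap is in the second clause. Your case split on $d$ is driven by an unnecessary intermediate goal: you try to show that the $\mathbb{Z}^{k}$-subaction of a $(d,k)$-universal SFT is a universal effective $\mathbb{Z}^{k}$-system, and for this you invoke the codimension-two realization theorem, which leaves $d=k+1$ uncovered. Your sketch for $d=k+1$ (encoding a subaction factor map by an infinite family of local rules and hoping a finite prefix certifies containment) is not a proof, and there is no obvious reason such finite certification should hold for factor maps between general subactions.

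The fix is that you never needed universality for all effective $\mathbb{Z}^{k}$-systems: to contradict Berger you only need the subaction to factor onto every nonempty $\mathbb{Z}^{k}$-\emph{SFT}. And that follows immediately from the definition of $(d,k)$-universality together with a trivial lift: given a $\mathbb{Z}^{k}$-SFT $Y\subseteq\Sigma^{\mathbb{Z}^{k}}$ with forbidden list $L$, interpret $L$ in the first $k$ coordinates of $\mathbb{Z}^{d}$ to obtain a $\mathbb{Z}^{d}$-SFT $Y'$; the $\mathbb{Z}^{k}$-subaction of $Y'$ is the $\mathbb{Z}^{d-k}$-indexed product of copies of $Y$, which projects onto $Y$. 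Then your first-clause algorithm applies verbatim (the source is an EDS by Theorem~\ref{thm:sub-action-is-REDS}, the target $Y$ has a finite alphabet, and you enumerate finite clopen partitions labelled by $\Sigma$), giving an r.e.\ test for non-emptiness of $\mathbb{Z}^{k}$-SFTs and contradicting Berger since $k\geq 2$. This works uniformly for all $d\geq k$, with no special case at $d=k+1$ and no appeal to the realization theorem.
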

To prove this, we show that the existence of a universal system could
be used as a component in an algorithm that would decide whether an
arbitrary finite set of patterns $L$ defines a non-empty SFT, which
would contradict Berger's theorem.

\subsection{Cellular automata}

Another class of dynamical systems defined by local rules are cellular
automata. Recall that a $d$-dimensional cellular automaton (CA) is
a function $f:\Sigma^{\mathbb{Z}^{d}}\rightarrow\Sigma^{\mathbb{Z}^{d}}$
which commutes with the shift action. This is well known to be equivalent
to being defined by a local, finite transition rule. See \cite{K05}
for definitions and a recent survey of the subject.

We next present two applications of the results from \cite{H07} to
the dynamics of CA. The first is analogous to the universality question
for SFTs, i.e.: are there universal CA? This questions seems to be
more difficult than for SFTs, except for the case $d=1$ where again
entropy considerations show that no universal object can exist. However,
using the relation between CA and SFTs (see e.g. \cite{H07}, section
5.1), we can show that for $d\geq3$ there exist CA whose dynamics
is very close to the universal effective $\mathbb{Z}$-system of theorem
\ref{thm:main-positive}. 

In discussing dynamics of CA one must first overcome the fact that
their action is in general neither invertible nor surjective (we are
interested in invertible dynamics, though the question makes sense
also in other categories). The limit set $\Lambda_{f}$ of a CA $f$
is the largest set on which $f$ acts surjectively: \[
\Lambda_{f}=\cap_{n=1}^{\infty}f^{n}(\Sigma^{\mathbb{Z}^{3}})\]
If $f$ act injectively on $\Lambda_{f}$ then the action is effective;
in any case, the natural extension of this system is effective.

Note that the limit action always contains a periodic point; this
imposes certain restrictions on the dynamics which can occur on limit
actions. However this is the only limitation. Combining theorem \ref{thm:main-positive}
with the results of \cite{H07} we have:
\begin{cor}
There exists a $3$-dimensional cellular automaton such that, after
removing from $\Lambda_{f}$ a fixed point and its basin of attraction,
is a universal $\mathbb{Z}$-EDS, and in particular factors onto the
natural extension of every CA.
\end{cor}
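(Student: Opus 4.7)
The plan is to combine Theorem~\ref{thm:main-positive} with the SFT-to-CA simulation machinery referenced in \cite{H07}. By Theorem~\ref{thm:main-positive} there is a universal $\mathbb{Z}$-EDS $U$, realized as (a factor of) the $\mathbb{Z}$-subaction of some $\mathbb{Z}^{d}$-SFT $X$ with $d\geq 3$. The first step is to invoke the construction reviewed in \cite{H07} (section~5.1) that encodes a higher-dimensional SFT into the space-time diagram of a cellular automaton: applied to $X$, this yields a $3$-dimensional CA $f$ whose limit set $\Lambda_f$, with the $\mathbb{Z}$-action given by iterating $f$, reproduces the $\mathbb{Z}$-subaction of $X$, apart from the unavoidable presence of a fixed point $p$. (Every CA limit set contains a periodic orbit, and the construction can be arranged so that this pathology is concentrated in a single fixed point.)

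The next step is to peel off the pathological part. Let $B(p)=\{x\in\Lambda_f : f^n(x)\to p\}$ be its basin of attraction. Then $Y = \Lambda_f \setminus (\{p\}\cup B(p))$ is an $f$-invariant effective subset of $\Lambda_f$ on which $f$ acts bijectively, so $(Y,f)$ is a genuine $\mathbb{Z}$-EDS (with no need to pass to a natural extension). By construction $(Y,f)$ factors onto the $\mathbb{Z}$-subaction of $X$, hence onto $U$, and so $(Y,f)$ itself qualifies as a universal $\mathbb{Z}$-EDS. Since the natural extension of any CA is an effective $\mathbb{Z}$-system, as noted just before the corollary, $(Y,f)$ factors onto it as required.

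The main obstacle is external to the present argument: it lies in the CA-simulation lemma of \cite{H07}, which must produce, from any prescribed SFT subaction, a CA whose limit dynamics faithfully reproduces that subaction with all pathology isolated in a single fixed point and its basin. Identifying that basin effectively, and verifying that no further periodic attractors appear, is the delicate part of the construction. Granted that tool, the present corollary reduces to a direct application of Theorem~\ref{thm:main-positive}.
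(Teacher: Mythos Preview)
Your proposal is correct and follows the paper's own approach. The paper does not actually give a separate proof of this corollary; it only says it follows by ``combining theorem~\ref{thm:main-positive} with the results of \cite{H07}'', which is exactly the route you spell out, and you correctly flag that the substantive work (the CA--SFT simulation with pathology isolated in a single fixed point) lives entirely in \cite{H07}.
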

It is known that there are CA $f$ such that for any other CA $g$,
one can encode the configurations of $g$ into configurations of $f$
in a spatially homogeneous way and so that the action of $f$ simulates
the action of $g$ (for a precise definition see \cite{O02}). This
notion is not directly related to universality in our sense. 

Our second application concerns one of the motivations for studying
CA in the first place, namely that they provide a simple model for
evolution of physical systems. We would like to show that they live
up to this expectation in the sense that, for a reasonably large class
of such systems, we can find a CA which models their dynamics very
closely. We note that although much has been made of the fact that
CA can perform universal computation, this in itself does not say
much about their dynamics. The dynamics of a computer simulating a
dynamical system is quite distinct from the dynamics of the system
it is simulating. 

Since effective systems can be modeled as limit actions of CA (minus
the basin of attraction of a fixed point), we proceed by showing that
a large class of systems can be extended to EDS. This may be viewed
as an effective dynamical Hausdorff-Alexandroff theorem. For simplicity
we restrict our attention to attractors of maps of $\mathbb{R}^{n}$,
with the aim establishing it under reasonably simple.

The following definition is adapted from \cite{BS06} where it is
proposed as a natural model of effective computation over the reals.
A function $f$ defined on some subset of $\mathbb{R}^{d}$ is \emph{effective
}if there is an algorithm which, upon being given input $n\in\mathbb{N}$
and an the infinite array encoding the binary representations of $d$
numbers $x_{1},\ldots,x_{d}\in\mathbb{R}$, reads a finite number
of bits from the input and outputs $d$ rational numbers $y_{1},\ldots,y_{d}$
such that \[
\left\Vert f(x_{1},\ldots,x_{d})-(y_{1},\ldots,y_{d})\right\Vert _{\infty}\leq\frac{1}{n}\]

\begin{defn}
\label{def:effective attractor}Suppose that
\begin{enumerate}
\item $U\subseteq\mathbb{R}^{d}$ is an open set,
\item $f:U\rightarrow U$ is an effective map,
\item $X\subseteq U$ is a closed attractor for $f$, i.e. there is an open
set $V\subseteq U$ so that $\overline{f(V)}\subseteq V$ and $X=\cap f^{n}V$.
\item $f|_{X}$ is invertible.
\end{enumerate}
Then we say that $X$ is an effective attractor of $f$.
\end{defn}
As before, the presence of periodic points in the limit action of
a CA prevents CA from modeling arbitrary dynamics, but this is in
a sense the only obstruction.
\begin{thm}
\label{thm:realization-of-attractors}If $X\subseteq\mathbb{R}^{d}$
is an effective attractor for $f$ then there is a $3$-dimensional
cellular automaton $g$ such that, after removing from $\Lambda_{g}$
a fixed point and its basin of attraction, the action of $g$ factors
onto $(X,f)$. 
\end{thm}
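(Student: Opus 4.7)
The plan is to realize $(X,f)$ as a topological factor of an effective $\mathbb{Z}$-system (EDS) and then invoke the preceding Corollary, which produces a $3$-dimensional cellular automaton whose limit action, after removal of a fixed point and its basin of attraction, factors onto every EDS. Thus the work is to construct an EDS $(Y,\mathrm{shift})$ together with a continuous factor map $\pi:Y\to(X,f)$ intertwining the shift with $f$.

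I will code points of $\mathbb{R}^d$ by the Cantor alphabet $\mathcal{C}=\{0,1\}^{\mathbb{N}}$: an element $y\in\mathcal{C}$ encodes a sequence $r_0,r_1,\ldots\in\mathbb{Q}^d$ with $\|r_k-r_{k+1}\|<2^{-k}$, converging to $\mathrm{pt}(y)\in\mathbb{R}^d$ whenever this telescoping bound holds. Before constructing $Y$, replace the given basin $V_0$ by $V=B_1\cup\cdots\cup B_m$, a finite union of open rational balls satisfying $X\subseteq V$ and $\overline{f(V)}\subseteq V$; this is done by taking the $B_i$ to be a finite cover of the compact set $\overline{f(V_0)}$ by open rational balls contained in $V_0$, so that $X=f(X)\subseteq\overline{f(V_0)}\subseteq V$ and $\overline{f(V)}\subseteq\overline{f(V_0)}\subseteq V$. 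Now set
\[
Y=\bigl\{y\in\mathcal{C}^{\mathbb{Z}}: \mathrm{pt}(y_n)\text{ is defined and in }\overline{V},\text{ and }f(\mathrm{pt}(y_n))=\mathrm{pt}(y_{n+1})\text{ for all }n\in\mathbb{Z}\bigr\},
\]
and $\pi(y)=\mathrm{pt}(y_0)$. Then $Y$ is closed and shift-invariant and $\pi$ is continuous and intertwines the shift with $f$. For any $p\in X$ the bi-infinite $f$-orbit $(f^n(p))_{n\in\mathbb{Z}}$ exists by invertibility of $f|_X$ and lies in $X\subseteq\overline{V}$, so any choice of names for its terms is a preimage of $p$; conversely, for $y\in Y$ and any $k\geq 0$, $\mathrm{pt}(y_0)=f^k(\mathrm{pt}(y_{-k}))\in f^k(\overline{V})\subseteq\overline{f^k(V)}$, and since $\overline{f(V)}\subseteq V$ forces $\bigcap_k\overline{f^k(V)}=X$, we get $\mathrm{pt}(y_0)\in X$, so $\pi(Y)=X$.

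Effectiveness of $Y$ is verified by enumerating cylinders covering the complement. A violation of the telescoping bound within some $y_n$ is visible from a finite prefix. The condition $\mathrm{pt}(y_n)\notin\overline{V}$ is witnessed by a rational approximation of $\mathrm{pt}(y_n)$ landing in a rational ball whose closure is disjoint from $\overline{V}$; since $\overline{V}$ is a finite union of closed rational balls, the family of such witnessing balls is r.e. Finally $f(\mathrm{pt}(y_n))\neq\mathrm{pt}(y_{n+1})$ is witnessed, using effectiveness of $f$, by a rational approximation of $f(\mathrm{pt}(y_n))$ (computable from a finite prefix of $y_n$) being separated by a positive rational gap from an approximation of $\mathrm{pt}(y_{n+1})$ read off a finite prefix of $y_{n+1}$. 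Composing $\pi$ with a factor map from the CA of the Corollary onto $Y$ yields the theorem. The main obstacle is the passage from the abstract basin provided by the definition to an effectively presented one built from finitely many rational balls, which is what makes the membership test $\mathrm{pt}(y_n)\in\overline{V}$ effectively co-enumerable; this is a soft topological fact about attractors, but it must be arranged carefully so that the decoding map, the effectiveness of $f$, and the effective presentation of $\overline{V}$ all cohere within the r.e.\ description of the complement of $Y$.
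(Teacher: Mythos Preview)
Your proof is correct and follows the same overall strategy as the paper: build an effective $\mathbb{Z}$-subshift $Y$ over the Cantor alphabet whose points encode bi-infinite $f$-orbits, exhibit a factor map $Y\to(X,f)$, and then invoke the CA realization machinery. The paper encodes points of $[0,1]^d$ by binary expansions and requires directly that each coordinate lie in $X$; to make this effectively co-enumerable it proves a separate lemma that the family of dyadic cells disjoint from the attractor is RE. You instead encode by fast Cauchy names and replace the condition ``$\in X$'' by ``$\in\overline{V}$'' for a finitely presented $\overline{V}$, which is trivially co-RE; the price is the extra (easy) argument that $\bigcap_k \overline{f^k(V)}=X$, so any encoded orbit automatically lands in $X$. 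Both routes ultimately rest on the same computable-analysis fact --- that from a finite localization of a point one can semi-decide disjointness of $f(\text{cell})$ from another cell --- which the paper isolates as Lemmas \ref{lem:approximating-image-of-dyadic-cell} and \ref{lem:disjoint-images-is-RE} and you fold into the sketch of case (c). Your shortcut via $\overline{V}$ is a nice simplification: it trades one nontrivial lemma for an elementary attractor argument.
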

For the proof one shows that effective attractors can be extended
to EDSs, and applies the machinery from \cite{H07}. As we have indicated,
the statement above is in a sense the best possible for invertible
dynamics without fixed points. It is true that in some ways, the dynamics
of the CA given by the theorem do not {}``look like'' the original
system: tracing back to the construction in \cite{H07} one sees that
the basin of attraction that we have thrown out is dense, and for
product measures on the configuration space typical points will converge
under the CA action to the fixed point. However, all the invariant
measures on the system, except for the point measure on the fixed
point, are pullbacks of measures from the original system. Furthermore,
under the technical assumption that the effective attractor to be
modeled satisfies the small boundary property, the EDS extending it
can be made to be injective on the complement of a universally null
set. Thus from the point of view of stationary dynamics the CA looks
very much like an extension of the original system.

\subsection*{Organization}

In section \ref{sec:Recursive-set-and-effective-dynamics} we recall
some of the recursive-theoretic machinery we will need and define
effective dynamics. In section \ref{sec:Universal-effective-Z-systems}
we prove theorem \ref{thm:main-positive}, and in section \ref{sec:Nonexistence-of-universal-REDS}
prove theorem \ref{thm:main-negative}. Section \ref{sec:Realization-of-effective-attractors}
discusses realization of effective attractors. Section \ref{sec:Open-Problems}
contains some open problems.
\begin{acknowledgement*}
I would like to thank Mike Boyle for some interesting discussions.
\end{acknowledgement*}

\section{\label{sec:Topological-and-symbolic-dynamics}Definitions and notation}

A $\mathbb{Z}^{d}$-dynamical system is the action of $\mathbb{Z}^{d}$
by homeomorphisms on a compact metric space $X$; the action of $u\in\mathbb{Z}^{d}$
is denoted usually by $T^{u}:X\rightarrow X$. A factor map between
systems $X,Y$ acted on by the same group is a continuous, onto map
$\pi:X\rightarrow Y$ which commutes with the action in the sense
that $\pi T^{u}=T^{u}\pi$ for every $u$.

Let $\Sigma$ be a finite set of symbols. The space $\Sigma^{\mathbb{Z}^{d}}$
of colorings of $\mathbb{Z}^{d}$ by $\Sigma$ is called the \emph{full
$d$-dimensional shift over $\Sigma$}, or just the full shift, and
its points are called \emph{configurations}. Topologically the full
shift is a Cantor set, and it comes equipped with a natural $\mathbb{Z}^{d}$
action, called the \emph{shift action}, in which $u\in\mathbb{Z}^{d}$
acts via the translation $T_{u}:\Sigma^{\mathbb{Z}^{d}}\rightarrow\Sigma^{\mathbb{Z}^{d}}$
defined by\[
(T^{u}x)(v)=x(u+v)\]
 A subset $X\subseteq\Sigma^{\mathbb{Z}^{d}}$ which is closed and
invariant to the shift (i.e. $T^{u}X=X$ for $u\in\mathbb{Z}^{d}$)
is called a \emph{subshift}, or a \emph{symbolic system}. By the Curtis-Hedlund-Lyndon
theorem \cite{HEDLUND69}, factor maps between subshifts of the same
dimension (but possibly distinct alphabets) are given by a \emph{block
code}: if $Y\subseteq\Delta^{\mathbb{Z}^{d}}$, $X\subseteq\Sigma^{\mathbb{Z}^{d}}$
and $\pi:Y\to X$ is a factor map, then there is a finite $F\subseteq\mathbb{Z}^{d}$
and a function $\pi_{0}:\Delta^{F}\rightarrow\Sigma$, so that $\pi$
acts on each site of $x\in\Delta^{\mathbb{Z}^{d}}$ by applying $\pi_{0}$
to the local neighborhood of the site: $(\pi x)(u)=\pi_{0}((T^{u}x)|_{F})$.
The diameter of $F$ is called the \emph{window size} of $\pi$. Conversely,
any such map $\pi_{0}:\Delta^{F}\rightarrow\Sigma$ gives rise to
a factor map $\pi$ in this way (the image is automatically a subshift).

\section{\label{sec:Recursive-set-and-effective-dynamics}Recursive sets and
effective dynamical systems}

\subsection{Some recursion theory}

We require some basic facts from recursion theory; see \cite{HU79}
for a formal introduction. Recursion theory provides a classification
of certain subsets of $\mathbb{N}$ according to the extent to which
the set may be described algorithmically. By an algorithm we mean
a finite set of instructions which can be carried out automatically,
i.e. by computer program or, more formally, a Turing machine. 

A subset $A\subseteq\mathbb{N}$ is \emph{recursive} (R) if there
is an algorithm which, given $n\in\mathbb{N}$, outputs {}``yes''
if $n\in A$ and {}``no'' otherwise. A function $f:\mathbb{N}\rightarrow\mathbb{N}$
is recursive if there is an algorithm which, given $n$, outputs $f(n)$.

A set $A$ is \emph{recursively enumerable }(RE) if there is an algorithm
which, on input $n$, returns {}``yes'' if $n\in A$ and otherwise
runs forever. Alternatively, a non-empty set $A\subseteq\mathbb{N}$
is RE if there is an algorithm which, given $n\in\mathbb{N}$, outputs
$a_{n}\in\mathbb{N}$ so that $A=\{a_{n}\,:\, n\in\mathbb{N}\}$;
in other words, it is the image of a recursive function. 

By fixing a bijection between $\mathbb{N}$ and another countable
set $U$, we can speak of R and RE subsets of $U$. Thus we will speak
of R and RE subsets of pairs of integers, finite sequences or patterns
over a finite set $\Sigma$, etc. We will always assume that the objects
have been placed in correspondence with $\mathbb{N}$ in some effective
way (for the purpose of classifying subsets as R or RE, two identifications
which can be algorithmically reduced to each other are equivalent).

Since there are countably many algorithms there are countably many
R and RE sets. We note that every recursive set is RE, but not vice-versa.
However, the examples of this tend to be rather artificial, e.g. the
set of provable theorems in number theory (G\"odel's theorem), or
the set of halting Turing machines (Turing's theorem). 

The following standard facts will be useful:
\begin{lem}
\label{lem:R-relation-of-RE-is-RE}Let $U$ be recursive and $L\subseteq U$
is an RE set. Let $R\subseteq U\times V$ be a recursive set and let\[
M=\{b\in V\,:\,(a,b)\in R\mbox{ for some }a\in L\}\]
Then $M$ is RE.\end{lem}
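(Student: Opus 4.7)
The plan is to exhibit an algorithm witnessing that $M$ is RE, using the characterization of RE sets as domains of halting algorithms (equivalently, as images of recursive functions). I will use the first characterization: I will describe a procedure which, on input $b\in V$, halts precisely when $b\in M$.

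First I would dispose of the degenerate case $L=\emptyset$: then $M=\emptyset$, and the empty set is trivially RE (the algorithm that never halts witnesses this). So I may assume $L\neq\emptyset$, and fix a total recursive function $n\mapsto a_n$ whose image is exactly $L$, as provided by the second characterization of RE sets quoted just before the lemma.

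The procedure deciding membership semidecidably in $M$ is then the following: on input $b\in V$, iterate over $n=1,2,3,\ldots$; at stage $n$, compute $a_n\in U$ (using the recursive enumeration of $L$), then decide whether $(a_n,b)\in R$ (possible because $R$ is recursive). If the answer is yes, halt and output ``yes''; otherwise proceed to stage $n+1$. By construction, if $b\in M$ then some $a\in L$ with $(a,b)\in R$ appears as $a_n$ for some $n$, and the procedure halts at stage $n$; conversely, if the procedure halts at some stage $n$, then $a_n\in L$ and $(a_n,b)\in R$, so $b\in M$. Hence $M$ is RE.

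There is really no hard step: the content of the lemma is the standard closure of RE sets under projection composed with a recursive relation, and the routine dovetailing above suffices. The only points of care are (i) making sure $U$ and $V$ have been identified with $\mathbb{N}$ in an effective way so that ``recursive'' and ``RE'' are meaningful, which the paper has already arranged in the preceding paragraph, and (ii) the empty-$L$ edge case handled at the start. If one instead preferred the image-of-a-recursive-function formulation of RE, one could dovetail over $n\in\mathbb{N}$ and over an effective enumeration $b_1,b_2,\ldots$ of $V$, outputting $b_j$ each time $(a_n,b_j)\in R$ is verified; this produces a recursive function whose image is $M$, giving the same conclusion.
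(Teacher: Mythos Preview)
Your proof is correct and follows essentially the same approach as the paper: both construct a semidecision procedure for $M$ that, on input $b$, searches over $L$ for an $a$ with $(a,b)\in R$. The only minor technical difference is that the paper uses the halting-domain characterization of RE for $L$ and dovetails over pairs $(n,a)$ with $n$ a time bound, whereas you use the image-of-a-recursive-function characterization (handling $L=\emptyset$ separately) and iterate directly over the enumeration $a_n$; both are standard and equivalent.
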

\begin{proof}
Let $A$ be an algorithm that on input $a\in U$ halts if $a\in L$
and runs forever otherwise. Let $B$ be the algorithm which, upon
input $b\in V$, iterates over all pairs $(n,a)\in\mathbb{N}\times U$,
and for each pair runs the algorithm $A$ for $n$ steps (or until
it halts) on the input $a$. If $A$ halts before $n$ steps are up,
it checks whether $(a,b)\in R$, and if so it halts; otherwise it
continues to the next pair $(n',a')$. It is easily seen that this
algorithm halts on input $b$ if and only if $b\in M$, so $M$ is
RE. \end{proof}
\begin{lem}
\label{lem:RE-and-coRE-isR}If a set $L\subseteq U$ if RE and $U\setminus L$
is RE then $L$ is R.\end{lem}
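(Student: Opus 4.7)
The plan is to run, in parallel (by dovetailing), the two semi-decision procedures witnessing that $L$ and $U\setminus L$ are RE. Since every element of $U$ lies in exactly one of $L$, $U\setminus L$, exactly one of these procedures halts on any given input, so the parallel process always terminates and tells us to which side the input belongs.

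More concretely, let $A$ be an algorithm which, on input $n\in U$, halts iff $n\in L$, and let $B$ be an algorithm which, on input $n\in U$, halts iff $n\in U\setminus L$; these exist by the hypothesis that $L$ and $U\setminus L$ are RE. Define a new algorithm $C$ as follows: on input $n\in U$, iterate $k=1,2,3,\ldots$; for each $k$, simulate $A$ on input $n$ for $k$ steps and simulate $B$ on input $n$ for $k$ steps. If at some stage the simulation of $A$ halts, output ``yes''; if the simulation of $B$ halts, output ``no''; if neither halts within $k$ steps, increment $k$ and repeat.

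Because every $n\in U$ lies in either $L$ or $U\setminus L$ but not both, exactly one of $A(n), B(n)$ eventually halts, say after $k_0$ steps; then for $k\geq k_0$ the loop will detect this and $C$ will terminate with the correct answer. Hence $C$ decides membership in $L$, so $L$ is recursive. There is no real obstacle here — the only subtlety is to dovetail rather than run $A$ to completion and then $B$, since neither algorithm is guaranteed to halt on its own.
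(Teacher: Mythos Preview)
Your proof is correct and follows essentially the same dovetailing argument as the paper: run the semi-decision procedures for $L$ and $U\setminus L$ for $k$ steps each, increasing $k$, and report whichever halts first.
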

\begin{proof}
Let $A$ and $B$ be algorithms which, given $x\in U$, halt if $x\in L$
or $x\in U\setminus L$, respectively, and otherwise run forever.
Consider the algorithm which accepts $x$ as input and iterates over
$n\in\mathbb{N}$. For each $n$ it simulates $n$ steps of the computation
of $A$ on input $x$, and if that computation halted it announces
that $a\in L$ and halts. Otherwise it simulates $n$ steps of the
computation of $B$ on $x$ and if that computation halts, it announces
$x\notin L$ and halts. If neither simulations terminates, it proceeds
to the next $n$. Clearly, our algorithm always halts and gives the
correct answer.
\end{proof}

\subsection{Effective subshifts and EDS}

Returning to dynamics, let $K=\{0,1\}^{\mathbb{N}}$ denote the Cantor
set, and for finite $I\subseteq\mathbb{N}$ and $a\in\{0,1\}^{I}$
let $[a]$ denote the cylinder set determined by $a$, i.e. \[
[a]=\{x\in K\,:\, x(i)=a(i)\,,\, i\in I\}\]
it will be convenient to write $\mathcal{P}$ for the set of finite
patterns of the form $\{0,1\}^{I}$, $I\subseteq\mathbb{N}$; so $\mathcal{P}$
parameterizes the cylinder sets of $K$. The cylinder sets form a
closed and open basis for the topology of $K$.

Let $\Omega=\Omega_{d}=K^{\mathbb{Z}^{d}}$, which topologically is
again a Cantor set. A basis for the topology of $\Omega$ is given
by the generalized cylinder sets $[\overline{a}]$, where $\overline{a}:E\rightarrow\mathcal{P}$
for some finite $E\subseteq\mathbb{Z}^{d}$, and \[
[\overline{a}]=\prod_{u\in\mathbb{Z}^{d}}V_{u}\qquad\mbox{where }V_{u}=[a(u)]\mbox{ for }u\in E\mbox{ and }V_{u}=K\mbox{ otherwise}\]
We write $\mathcal{P}^{*d}$ for the set of such maps $\overline{a}:E\rightarrow\mathcal{P}$,
$E\subseteq\mathbb{Z}^{d}$ finite. The sets $[\overline{a}]$ for
$\overline{a}\in\mathcal{P}^{*d}$ form a closed and open basis for
$\Omega$.

As usual, let $\{T^{u}\}_{u\in\mathbb{Z}^{d}}$ denote the shift action
of $\mathbb{Z}^{d}$ on $\Omega$. A subshift $X\subseteq\Omega$
is, as usual, a nonempty, closed subset which is invariant under the
shift action. 

Every subshift (in fact every closed subset) is the complement of
a countable union of cylinder sets. If the complement is the union
of a recursive sequence of cylinder sets, we say it is effective.
To be precise, fix an effective enumeration of $\mathcal{P}$ and
use this to enumerate the elements of $\mathcal{P}^{*d}$, which parameterizes
a basis for $\Omega$. 
\begin{defn}
\label{def:effective-subshift}An effective subshift is a subshift
$X\subseteq\Omega$ such that $X=\Omega_{L}$ for some recursively
enumerable $L\subseteq\mathcal{P}^{*}$, where\begin{eqnarray*}
\Omega_{L} & = & \{x\in\Omega\,:\, x\notin T^{u}[\overline{a}]\mbox{ for every }a\in L\mbox{ and }u\in\mathbb{Z}^{d}\}\\
 & = & \Omega\setminus\bigcup_{\overline{a}\in L}\bigcup_{u\in\mathbb{Z}^{d}}T^{u}[\overline{a}]\end{eqnarray*}
or, equivalently, if the set\[
\{a\in\mathcal{P}^{*d}\,:\, X\cap[\overline{a}]=\emptyset\}\]
is recursively enumerable. 
\end{defn}
Since there are countably many algorithms, there are countably many
EDS, representing only countable many isomorphism types of systems.
In spite of this we do not know of any {}``natural'' invariant of
dynamical systems which cannot be realized as an EDS.

In \cite{H07} we showed:
\begin{thm}
\label{thm:sub-action-is-REDS}The subaction of an SFT is an EDS.
\end{thm}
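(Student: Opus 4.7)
The plan is to realize the $\mathbb{Z}^k$-subaction of a given $\mathbb{Z}^d$-SFT $\sys{L}\subseteq\Sigma^{\mathbb{Z}^d}$ as an effective subshift of $K^{\mathbb{Z}^k}$ via a computable embedding, and then verify effectiveness by exhibiting the set of empty cylinders as recursively enumerable. To set up the embedding, decompose $\mathbb{Z}^d=\mathbb{Z}^k\oplus\mathbb{Z}^{d-k}$ so that the first summand generates the subaction. Each configuration $x\in\Sigma^{\mathbb{Z}^d}$ is then a $\mathbb{Z}^k$-indexed family of ``columns'' $x(u,\cdot)\in\Sigma^{\mathbb{Z}^{d-k}}$. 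Fix an effective enumeration $v_1,v_2,\ldots$ of $\mathbb{Z}^{d-k}$ and a fixed-length binary encoding of $\Sigma$, and let $\phi:\Sigma^{\mathbb{Z}^{d-k}}\to K$ concatenate the encodings of $x(u,v_1),x(u,v_2),\ldots\,$. This $\phi$ is a computable, continuous injection, so it induces an injection $\Phi:\Sigma^{\mathbb{Z}^d}\to K^{\mathbb{Z}^k}$ which intertwines the $\mathbb{Z}^k$-subaction with the shift. Since $\sys{L}$ is compact and $\mathbb{Z}^k$-invariant, $\Phi(\sys{L})$ is a closed, shift-invariant subset of $K^{\mathbb{Z}^k}$, hence a subshift.

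Next, I verify that $E=\{\overline{a}\in\mathcal{P}^{*k}:\Phi(\sys{L})\cap[\overline{a}]=\emptyset\}$ is recursively enumerable. A generalized cylinder $[\overline{a}]$ is specified by pinning down finitely many bits of $\phi(x(u,\cdot))$ for finitely many $u$; unwinding $\phi$, this is the same as constraining $x(u,v)\in S_{u,v}\subseteq\Sigma$ for finitely many pairs $(u,v)\in\mathbb{Z}^d$, where the sets $S_{u,v}$ are computable from $\overline{a}$. Hence $\Phi^{-1}([\overline{a}])$ is the union of cylinders $[p]$ in $\Sigma^{\mathbb{Z}^d}$ as $p$ ranges over a finite, computable list of patterns compatible with the $S_{u,v}$, and $\overline{a}\in E$ iff every such $p$ is \emph{globally inadmissible} in $\sys{L}$, meaning $p$ appears in no configuration of $\sys{L}$.

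Finally, the set of globally inadmissible patterns of $\sys{L}$ is RE: by a standard compactness argument, $p:D\to\Sigma$ is inadmissible iff there exists a finite box $B\supseteq D$ such that no extension $q:B\to\Sigma$ of $p$ avoids the patterns of $L$; since $L$ is finite, the inner test is decidable, so iterating over candidate boxes $B$ yields a semi-decision procedure for inadmissibility. Dovetailing this with the computation of the compatible patterns for each $\overline{a}$, and waiting until all such patterns are certified inadmissible, gives an RE enumeration of $E$. The main thing to get right is the bookkeeping of the middle step---translating generalized cylinders in $K^{\mathbb{Z}^k}$ faithfully into symbolic constraints on $\Sigma^{\mathbb{Z}^d}$---since everything else reduces to the standard compactness argument for SFTs together with the elementary closure properties of RE sets already at our disposal.
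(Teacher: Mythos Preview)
The paper does not actually prove this theorem; it merely states it and attributes the proof to the earlier paper \cite{H07}. So there is no in-paper proof to compare against. That said, your argument is correct and is essentially the natural one: embed $\Sigma^{\mathbb{Z}^d}$ into $K^{\mathbb{Z}^k}$ by encoding each $\mathbb{Z}^{d-k}$-column as a point of the Cantor set, pull back generalized cylinders to finite symbolic constraints, and then use the standard compactness argument to see that global inadmissibility for an SFT is semi-decidable. The only point worth flagging is a small bookkeeping issue you already anticipate: some bit-constraints in $[\overline{a}]$ may be incompatible with the image of $\phi$ (e.g.\ when $|\Sigma|$ is not a power of $2$), in which case some $S_{u,v}$ is empty and the list of compatible patterns is vacuous, so $\overline{a}\in E$ trivially; your formulation handles this correctly.
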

More important for our present discussion is the partial converse
obtained there:
\begin{thm}
\label{thm:realizing-REDS}If $X$ is a $\mathbb{Z}^{d}$-EDS then
there is a $(d+2)$-SFT $Y$ and a $\mathbb{Z}^{d}$-subaction of
$Y$ which factors onto $(X,\mathbb{Z}^{d})$.
\end{thm}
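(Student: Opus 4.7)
The plan is to construct $Y$ as a $(d+2)$-dimensional SFT by combining two classical devices: a self-similar hierarchical structure in the two extra dimensions of Robinson type, and a simulation of a Turing machine that enumerates the recursively enumerable set $L \subseteq \mathcal{P}^{*d}$ defining $X$. The two extra dimensions are playing distinct roles: one acts as a vertical axis used to spread out the infinite binary sequences that live at each site of $K^{\mathbb{Z}^d}$, while the other acts as a ``time'' axis in which Turing machine simulations run.

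First I would build the encoding layer. Since $X \subseteq \Omega = K^{\mathbb{Z}^{d}}$ and $K = \{0,1\}^{\mathbb{N}}$, each site of a configuration in $X$ carries an infinite binary sequence which cannot be stored in a single SFT symbol. I would therefore reserve one extra dimension, call it the vertical $v$-axis, to unroll the sequence: the symbol at position $(u, v, w) \in \mathbb{Z}^{d+2}$ carries the $v$-th bit of the sequence assigned to the $\mathbb{Z}^d$-site $u$. Local SFT rules force this encoded data to be invariant under translation in the remaining extra dimension $w$. The candidate point of $\Omega$ is then read off from the $\mathbb{Z}^d$-subaction fixing $w = 0$, and the factor map is simply this projection followed by the obvious bit-recollection.

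Next I would use the $w$-direction as the time axis for computation. Using a Robinson-style self-similar Wang-tile construction in the $(v, w)$-plane, the SFT enforces that every finite region is contained in arbitrarily large axis-aligned squares of sides $2^n$ that are free of interference and carry a clean computational workspace. Inside each such square of side $N$, the simulated Turing machine is the one that enumerates $L$; within $N$ steps it outputs finitely many forbidden patterns $\overline{a} \in L$. For each pattern produced, the SFT's local rules force a verification step: the bits visible in the encoding layer on the $\mathbb{Z}^d$-face bordering the square must not realize $\overline{a}$ at the site directly associated with the square. Because the hierarchy of squares covers every location at every scale, every pattern in $L$ is eventually tested against every part of the encoded configuration, so the $\mathbb{Z}^d$-subaction of $Y$ projects into $\Omega_L = X$. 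Conversely, if $x \in X$ then $x$ avoids $L$ by definition, so one can fill in the hierarchy, the computation tapes, and the check bits consistently, exhibiting a preimage of $x$.

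The main obstacle is combinatorial: making the self-similar tile hierarchy, the encoded bit layer, and the Turing machine simulations coexist inside a single finite set of local rules, so that the constraints are simultaneously realizable on every $x \in X$ and yet strong enough to enforce the global verification against $L$. One must in particular synchronize the data visible on the boundary of a scale-$2^n$ square with the particular $\mathbb{Z}^d$-site that square is charged with checking, and ensure that checks performed at different scales are mutually consistent rather than conflicting. This is the technical heart of the construction referred to as \cite{H07}, and it absorbs essentially all of the work; once it is in place, surjectivity of the factor map onto $X$ and the fact that $Y$ is a genuine SFT both follow by routine verification.
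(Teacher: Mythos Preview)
The present paper does not contain a proof of this theorem: it is quoted from \cite{H07} and only the statement is reproduced here. There is therefore no proof in this paper against which to compare your proposal.

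That said, your outline is broadly faithful to the construction carried out in \cite{H07}: one of the extra dimensions is used to unfurl the infinite $K$-valued symbols into bit-sequences, a Robinson-type self-similar hierarchy in the two new dimensions organizes computation at all scales, and simulated Turing machines enumerate $L$ and check that no enumerated generalized pattern occurs in the encoded configuration. You also correctly locate the real work in synchronizing the hierarchy, the encoded data, and the verification so that every translate of every $\overline{a}\in L$ is eventually tested while the constraints remain simultaneously satisfiable over each $x\in X$.

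One point that deserves sharpening: a forbidden $\overline{a}\in\mathcal{P}^{*d}$ is supported on a finite subset of $\mathbb{Z}^d$ and thus involves several $\mathbb{Z}^d$-sites at once. A computation living purely in the $(v,w)$-plane over a single site $u$, with the encoded data declared constant in $w$, sees only the bits attached to that one $u$; it cannot on its own verify a pattern spanning several $u$'s. In \cite{H07} this is handled by letting the hierarchical structure and the computations extend in the $\mathbb{Z}^d$-directions as well, so that information from neighboring $\mathbb{Z}^d$-columns is transported into the checking mechanism. Your clean separation of roles for the two extra dimensions is a helpful heuristic but understates this entanglement; as written, the sketch does not yet explain how a square ``charged with'' a site $u$ gains access to the bits at the other sites appearing in the support of $\overline{a}$.
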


\section{\label{sec:Universal-effective-Z-systems}Universal effective $\mathbb{Z}$-systems}

In this section we prove the following result:
\begin{thm}
\label{thm:universal-REDS}There exists a $\mathbb{Z}$-EDS which
factors onto every other $\mathbb{Z}$-EDS.
\end{thm}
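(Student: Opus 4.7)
The plan is to construct the universal $\mathbb{Z}$-EDS as the product of a suitable uniformly effective enumeration of all non-empty $\mathbb{Z}$-EDS. Fix an effective enumeration $(M_n)_{n\in\mathbb{N}}$ of all Turing machines enumerating RE subsets $L_n\subseteq\mathcal{P}^{*1}$; by Definition~\ref{def:effective-subshift}, every $\mathbb{Z}$-EDS has the form $\Omega_{L_n}$ for some $n$, but some of the $\Omega_{L_n}$ may be empty, and in general we cannot decide which (non-emptiness is $\Pi_1$). The crucial step is an effective truncation turning $M_n$ into a machine $M_n'$ enumerating an RE list $L_n'$ such that $\Omega_{L_n'}$ is always non-empty and equals $\Omega_{L_n}$ whenever the latter is non-empty. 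The machine $M_n'$ simulates $M_n$; before committing to each newly produced pattern it runs a non-emptiness test on the already committed list together with the candidate, and it aborts the first time the test would fail. This test is decidable in dimension one, because any finite $F\subseteq\mathcal{P}^{*1}$ mentions only finitely many Cantor-set coordinates and $\mathbb{Z}$-sites, so $\Omega_F$ pulls back from a finite-alphabet one-dimensional SFT whose emptiness is decided by the standard graph-cycle argument. If $\Omega_{L_n}\neq\emptyset$ then by compactness every finite initial segment of $L_n$ also has non-empty $\Omega$, so the truncation never triggers and $L_n'=L_n$; otherwise $L_n'$ is a finite list defining a non-empty SFT. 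Thus $(\Omega_{L_n'})_{n\in\mathbb{N}}$ is a uniformly effective enumeration of non-empty $\mathbb{Z}$-EDS that contains every non-empty $\mathbb{Z}$-EDS.

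I would then set $Y=\prod_{n\in\mathbb{N}}\Omega_{L_n'}$ and realize it as a subshift of $\Omega_1=K^{\mathbb{Z}}$ by packing the meta-index $n$ and the Cantor-set bit-index $i$ into a single coordinate of $K$ via a fixed recursive bijection $\mathbb{N}\times\mathbb{N}\to\mathbb{N}$. The diagonal shift on the product corresponds to the usual shift on $\Omega_1$, and projection to the $n$-th factor is a factor map $Y\to\Omega_{L_n'}$; hence $Y$ factors onto every non-empty $\mathbb{Z}$-EDS. To check that $Y$ is itself an EDS, observe that a basic cylinder $[\overline{a}]$ in $\Omega_1$ involves only a finite, computable set $N$ of indices $n$, and splits into slices $[\overline{a}_n]$ in the corresponding factors; then $[\overline{a}]\cap Y=\emptyset$ if and only if $[\overline{a}_n]\cap\Omega_{L_n'}=\emptyset$ for some $n\in N$. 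For each fixed $n$ the latter is RE: by compactness of $[\overline{a}_n]$ and the RE description of the complement of $\Omega_{L_n'}$, one enumerates finite coverings of $[\overline{a}_n]$ by shifted forbidden cylinders drawn from $L_n'$ and halts when a covering is found. A finite disjunction over $n\in N$ preserves RE, so the forbidden cylinders of $Y$ form an RE set and $Y$ is effective.

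The main obstacle in this plan is the effective truncation, which depends entirely on decidability of emptiness for one-dimensional SFTs. This decidability famously fails in dimension $\geq 2$ by Berger's theorem, and it is the reason the above construction cannot be imitated for $\mathbb{Z}^d$-EDS with $d\geq 2$, in line with Theorem~\ref{thm:main-negative}.
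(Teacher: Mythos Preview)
Your proposal is correct and follows essentially the same approach as the paper: enumerate all RE pattern lists, use the decidability of emptiness for one-dimensional SFTs (your reduction to a finite-alphabet SFT and graph-cycle test is exactly Lemma~\ref{lem:decidability-of-emptiness-of-1D-SFT}) to truncate each list the moment it would become empty, and then take the product, which is shown to be an EDS by the same coordinate-packing argument as Lemma~\ref{lem:RE-product-is-RE}. The only cosmetic differences are that the paper organizes the truncation via time-bounded approximants $L_{n,k}$ rather than a step-by-step commit, and proves effectiveness of the product by exhibiting a generating RE list of forbidden patterns rather than directly verifying that $\{\overline{a}:[\overline{a}]\cap Y=\emptyset\}$ is RE; both pairs of arguments are interchangeable.
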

Together with theorem \ref{thm:realizing-REDS} this proves theorem
\ref{thm:main-positive}.

A sequence $L_{n}\subseteq\mathbb{N}$ of sets is \emph{uniformly
recursively enumerable} if $\cup_{n=1}^{\infty}L_{n}\times\{n\}\subseteq\mathbb{N}^{2}$
is RE, or equivalently, if there is an algorithm $A$ which, given
input $i,j\in\mathbb{N}$, halts if $j\in L_{i}$ and otherwise runs
forever. 
\begin{lem}
\label{lem:RE-product-is-RE}Let $L_{n}\subseteq\mathcal{P}^{*d}$
be a uniformly RE sequence of sets. Write $X_{n}=\Omega_{L_{n}}$.
Then the product system $\prod_{n=1}^{\infty}X_{n}$ is an EDS.\end{lem}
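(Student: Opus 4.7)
The plan is to realize $\prod_{n=1}^{\infty} X_{n}$ as an effective subshift of $\Omega_{d}$ by combining the forbidden patterns of each $X_{n}$ into a single uniformly RE list, using the fact that a countable product of Cantor sets is again a Cantor set in an effective way.

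First I would fix an effective bijection $\phi:\mathbb{N}\rightarrow\mathbb{N}\times\mathbb{N}$. This yields an effective identification $K\cong K^{\mathbb{N}}$ and hence a homeomorphism
\[
\Omega_{d}=K^{\mathbb{Z}^{d}}\;\cong\;\prod_{n=1}^{\infty}K^{\mathbb{Z}^{d}}=\prod_{n=1}^{\infty}\Omega_{d}
\]
that conjugates the shift on $\Omega_{d}$ with the diagonal shift on the product (since $\phi$ acts only on the $K$-coordinate). Under this identification a point $x\in\Omega_{d}$ decomposes as $(x_{n})_{n\geq 1}$ with $x_{n}\in\Omega_{d}$, and basic cylinders on $\Omega_{d}$ correspond to products of basic cylinders in the factors, all but finitely many of which are trivial. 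Through $\phi$, any pattern $\overline{b}\in\mathcal{P}^{*d}$ intended as a constraint on the $n$-th factor can be computably lifted to a pattern $\overline{b}^{(n)}\in\mathcal{P}^{*d}$ on $\Omega_{d}$ which restricts only the coordinates belonging to the image of $\{n\}\times\mathbb{N}$ under $\phi$.

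Next I would define the global forbidden list
\[
L=\{\overline{b}^{(n)}\,:\,n\in\mathbb{N},\;\overline{b}\in L_{n}\}\subseteq\mathcal{P}^{*d}.
\]
The set $\{(n,\overline{b})\,:\,\overline{b}\in L_{n}\}$ is RE by the uniform recursive enumerability of the $L_{n}$, and the map $(n,\overline{b})\mapsto\overline{b}^{(n)}$ is recursive, so by Lemma \ref{lem:R-relation-of-RE-is-RE} the set $L$ is RE. I would then verify directly from the definitions that $\Omega_{L}=\prod_{n=1}^{\infty}X_{n}$: a configuration $x=(x_{n})$ lies in $\Omega_{L}$ iff no translate of any $\overline{b}^{(n)}$ appears in $x$, iff no translate of any $\overline{b}\in L_{n}$ appears in $x_{n}$ for each $n$, iff $x_{n}\in\Omega_{L_{n}}=X_{n}$ for each $n$. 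Combined with the fact that the identification is shift-equivariant, this shows $\prod X_{n}$ is an effective subshift of $\Omega_{d}$.

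The only real obstacle is bookkeeping: one must fix the bijection $\phi$ and verify that the lift $(n,\overline{b})\mapsto\overline{b}^{(n)}$ is genuinely recursive in both $n$ and $\overline{b}$, which is where the uniformity hypothesis on $(L_{n})$ is used (as opposed to merely each $L_{n}$ being individually RE). Once this uniform lift is in place, the conclusion follows formally from the lemma on RE images of RE sets under recursive relations.
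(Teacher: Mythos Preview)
Your proof is correct and follows essentially the same approach as the paper: both use an effective bijection $\mathbb{N}\cong\mathbb{N}^{2}$ to identify $\Omega_{d}$ with $\prod_{n}\Omega_{d}$, pull the forbidden lists $L_{n}$ back to a single RE list $L\subseteq\mathcal{P}^{*d}$, and invoke Lemma~\ref{lem:R-relation-of-RE-is-RE} to conclude that $L$ is RE. The paper's $L$ is slightly larger---it consists of all $\overline{a}$ whose $n$-th projection is contained in some $[\overline{b}]$ with $\overline{b}\in L_{n}$---but this defines the same $\Omega_{L}$ as your more economical list of direct lifts $\overline{b}^{(n)}$.
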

\begin{proof}
The proof is based on the fact that $K\cong\prod_{n=1}^{\infty}K$,
and this isomorphism can be made effective. Fix some recursive bijection
$\varphi:\mathbb{N}^{2}\rightarrow\mathbb{N}$, and let \[
I_{n}=\{k\in\mathbb{N}\,:\, k=\varphi(i,n)\mbox{ for some }i\in\mathbb{N}\}\]
which is a partition of $\mathbb{N}$ into disjoint infinite sets.
We may identify $I_{n}$ with $\mathbb{N}$ via $\varphi(\cdot,n):\mathbb{N}\rightarrow I_{n}$.
Let $\pi_{n}:K\rightarrow K$ be the projection $x\mapsto x|_{I_{n}}$where
identify $x|_{I_{n}}$ with a point in $K$ using this bijection of
$I_{n}$ and $\mathbb{N}$. Extend $\pi_{n}$ to patterns over $K$
pointwise, so if $a\in K^{E}$ for a finite set $E\subseteq\mathbb{Z}^{d}$
then $(\pi_{n}(a))(u)=\pi_{n}(a(u))$, $u\in\mathbb{Z}^{d}$. Then
$x\mapsto(\pi_{1}(x),\pi_{2}(x),\ldots)$ is a homeomorphism from
$\Omega$ to $\prod_{n=1}^{\infty}\Omega$.

Let $L\subseteq\mathcal{P}^{*d}$ be defined by \[
L=\{\overline{a}\in\mathcal{P}^{*d}\;,\;\pi_{n}([\overline{a}])\subseteq[\overline{b}]\mbox{ for some }n\in\mathbb{N}\mbox{ and }\overline{b}\in L_{n}\}\]
A moments thought shows that $\Omega_{L}\cong\prod_{n=1}^{\infty}\Omega_{L_{n}}$.
In order to prove the lemma it suffices to show that $L$ is RE. Below
we provide the details.

First, note that if $a,b\in\mathcal{P}$ then we can check whether
$[a]\subseteq[b]$ in $K$: If $a:I\rightarrow\{0,1\}$ and $b:J\rightarrow\{0,1\}$
this amounts to verifying that $J\subseteq I$ and $b(i)=a(i)$ for
$i\in J$. 

Similarly, if $\overline{a},\overline{b}\in\mathcal{P}^{*d}$ then
we can decide whether $[\overline{a}]\subseteq[\overline{b}]$ in
$\Omega$: If $\overline{a}:E\rightarrow\mathcal{P}$ and $\overline{b}:F\rightarrow\mathcal{P}$,
we only need to check that $F\subseteq E$ and that $[\overline{a}(u)]\subseteq[\overline{b}(u)]$
for $u\in F$; by the above this inclusion is decidable.

Thus, if $\overline{a},\overline{b}\in\mathcal{P}^{*d}$ and $n\in\mathbb{N}$,
then we can decide whether $\pi_{n}([\overline{a}])\subseteq[\overline{b}]=\emptyset$
or not. 

The fact that $L$ is RE now follows from lemma \ref{lem:R-relation-of-RE-is-RE},
since by assumption $L_{*}=\cup_{n\in\mathbb{N}}\{n\}\times\{L_{n}\}$
is RE, and\[
L=\{\overline{a}\in\mathcal{P}^{*d}\;,\;\pi_{n}([\overline{a}])\subseteq[\overline{b}]\mbox{ for some }(n,\overline{b})\in L_{*}\}\qedhere\]

\end{proof}
We also need the following:
\begin{lem}
\label{lem:decidability-of-emptiness-of-1D-SFT}Given a finite $L\subseteq\mathcal{P}^{*1}$,
it is decidable whether $\Omega_{L}=\emptyset$ or not.\end{lem}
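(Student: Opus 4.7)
The plan is to reduce the problem to the classical fact that emptiness of a one-dimensional subshift of finite type over a \emph{finite} alphabet, given by a finite list of forbidden patterns, is decidable. This classical result is proved by forming a de Bruijn-type graph whose vertices are the legal windows and whose edges record legal overlaps, and then testing for the existence of a bi-infinite path, equivalently for the presence of a cycle in some strongly connected component; the check is algorithmic.

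The first step is to truncate $K=\{0,1\}^{\mathbb{N}}$ to a finite alphabet using the finiteness of $L$. Each $\overline{a}\in L$ is a map $\overline{a}:E_{\overline{a}}\to\mathcal{P}$ with finite domain, and each cylinder $\overline{a}(u)\in\mathcal{P}$ is itself a pattern on a finite subset of $\mathbb{N}$. Because $L$ is finite, there is a single $N\in\mathbb{N}$ such that every $\overline{a}(u)$ appearing in $L$ depends only on coordinates in $\{0,1,\ldots,N-1\}$. Set $\Sigma=\{0,1\}^{N}$ and let $\pi:K\to\Sigma$ be the restriction map; this extends pointwise to a shift-commuting, surjective map $\Pi:\Omega\to\Sigma^{\mathbb{Z}}$.

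The second step is to translate $L$ into a finite list of ordinary forbidden patterns over $\Sigma$. For each $\overline{a}:E\to\mathcal{P}$ in $L$, define
\[
L'_{\overline{a}}=\bigl\{p\in\Sigma^{E}\,:\, p(u)|_{I_u}=\overline{a}(u)\text{ for every }u\in E\bigr\},
\]
where $I_u$ is the domain of $\overline{a}(u)$. Each $L'_{\overline{a}}$ is finite, so $L'=\bigcup_{\overline{a}\in L}L'_{\overline{a}}$ is a finite set of patterns on $\Sigma$. By the choice of $N$, an element $x\in\Omega$ lies in some $T^{u}[\overline{a}]$ iff $\Pi(x)$ meets the corresponding translate of $[p]$ for some $p\in L'_{\overline{a}}$; therefore $\Omega_{L}=\Pi^{-1}(X_{L'})$. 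Since $\Pi$ is surjective, $\Omega_{L}=\emptyset$ if and only if $X_{L'}=\emptyset$.

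The final step applies the classical decidability of emptiness for $\mathbb{Z}$-SFTs over finite alphabets to $X_{L'}$. The construction of $N$, $\Sigma$, and $L'$ from $L$ is manifestly recursive, so composing it with the classical algorithm yields an algorithm that decides emptiness of $\Omega_{L}$. I do not anticipate a real obstacle here: the essential content is the classical one-dimensional result, and the only thing to verify carefully is that the projection step correctly identifies $\Omega_{L}$ with the pullback of $X_{L'}$, which follows from the surjectivity of $\Pi$ and the choice of $N$.
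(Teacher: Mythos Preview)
Your proposal is correct and follows essentially the same approach as the paper: both truncate the Cantor-set alphabet to a finite alphabet $\{0,1\}^{I}$ (your $\{0,1\}^{N}$) using the fact that the finitely many generalized cylinders in $L$ depend only on finitely many coordinates of $K$, rewrite $L$ as a finite list of ordinary forbidden patterns over that finite alphabet, and then invoke the classical graph/cycle test for emptiness of a one-dimensional SFT. Your write-up is in fact somewhat more explicit than the paper's about the projection $\Pi$ and the identity $\Omega_{L}=\Pi^{-1}(X_{L'})$, but the substance is the same.
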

\begin{proof}
Suppose $L$ is given. Let $I\subseteq\mathbb{N}$ be large enough
that if $\overline{a}\in L$, $\overline{a}:E\rightarrow\mathcal{P}$,
then every pattern $\overline{a}(i)$, $i\in E$ is supported in $I$.
It is not hard to check that $\Omega_{L}\neq\emptyset$ if and only
if there is an infinite sequence $x=(x(n))_{n\in\mathbb{Z}}$ over
the alphabet $\{0,1\}^{I}$ so that, for every every $k\in\mathbb{N}$
there is no $\overline{a}\in L$, $\overline{a}:E\rightarrow\mathcal{P}$,
which satisfies $x(k+i)(n)=(\overline{a}(i))(n)$ for $i\in E$ and
all $n$ at which $\overline{a}(i)$ is defined. Thus, deciding whether
$\Omega_{L}$ is empty is equivalent to deciding whether a certain
$\mathbb{Z}$-SFT over the alphabet $\{0,1\}^{I}$ is empty (the last
condition, though cumbersome, is a finite, local restriction symbols
in $x$ and is equivalent to excluding a finite number of patterns).
Clearly the reduction from the first problem to the second is computable,
and the second problem is decidable, since it is equivalent to deciding
whether there are any cycles in a finite graph associated to the given
SFT in an effective way (see e.g. \cite{LM95}). Thus the original
problem is decidable as well. 
\end{proof}

\begin{proof}
(of theorem \ref{thm:universal-REDS}) We show that there is a uniformly
recursive sequence $L_{n}\subseteq\mathcal{P}^{*1}$ so that $X_{n}=\Omega_{L_{n}}\neq\emptyset$
for every $n$ and every $\mathbb{Z}$-EDS appears as one of the $X_{n}$'s.
Given such a sequence, the product $\prod_{n=1}^{\infty}X_{n}$ is
universal for EDS and is itself an EDS by the previous lemma.

Let $(A_{n})_{n=1}^{\infty}$ be a fixed recursive enumeration of
all algorithms and let \[
L'_{n}=\{\overline{b}\,:\, A_{n}\mbox{ halts on input }\overline{b}\}\]
The sequence $L'_{n}$ is uniformly RE, because given $n$ and $\overline{b}$,
in order to determine if $\overline{b}\in L'_{n}$ one first computes
$A_{n}$ (we can because the sequence $A_{n}$ is recursive) and then
simulates the computation of $A_{n}$ on input $\overline{b}$, halting
only if this computation halts. This achieves the first of our goals,
since clearly the sequence $\Omega_{L'_{n}}$ will contain all effective
subshifts. The problem is that some $\Omega_{L'_{n}}$'s will be empty.
We therefore will define an RE sequence $L_{n}$ with $L_{n}=L'_{n}$
if $\Omega_{L'_{n}}\neq\emptyset$, and with $\Omega_{L_{n}}\neq\emptyset$
in any case.

First, given $n,k\in M$ and $\overline{a}\in\mathcal{P}^{*d}$ we
say that $\overline{a}$ is $(n,k)$-recognized if the algorithm $A_{n}$
halts on input $\overline{a}$ within $k$ steps. Note that this condition
is recursive, since $A_{n}$ can be computed from $n$ and then its
computation on input $\overline{a}$ can be simulated for $k$ steps
to see if it halts.

Choose an enumeration $\overline{b}_{1},\overline{b}_{2},\ldots$
of $\mathcal{P}^{*d}$. Given $k$ let \[
L_{n,k}=\{\overline{b}_{i}\,:\,1\leq i\leq k\mbox{ and }\overline{b}_{i}\mbox{ is }(n,k)\mbox{-recognizable \}}\]
Clearly the $L_{n,k}$ can be computed given $n,k$, they are increasing
in $k$, and their union is $L'_{n}$. 

Finally, let\[
L_{n}=\{\overline{a}\in\mathcal{P}^{*d}\,:\,\Omega_{L_{n,k}}\neq\emptyset\mbox{ and }\overline{a}\in L_{n,k}\mbox{ for some }k\in\mathbb{N}\}\]
By lemma \ref{lem:R-relation-of-RE-is-RE} we see that $L_{n}$ is
RE. Also, if $\Omega_{L'_{n}}\neq\emptyset$ then $\Omega_{L_{n,k}}\neq\emptyset$
for every $k$ so $\overline{a}\in L_{n}$ if and only if $\overline{a}\in L'_{n}$,
or in other words, $L_{n}=L'_{n}$. On the other hand, if $\Omega_{L'_{n}}=\emptyset$
then by compactness there is a $k$ for which $\Omega_{L_{n,k}}=\emptyset$;
let $k_{0}$ be the minimal such $k$. One sees that $,\overline{a}\in L_{n}$
if and only if $\overline{a}\in L_{n,k_{0}-1}$, so $L_{n}=L_{n,k_{0}-1}$
and by definition $\Omega_{L_{n,k_{0}}}\neq\emptyset$. This completes
the proof.
\end{proof}

\section{\label{sec:Nonexistence-of-universal-REDS}Nonexistence of universal
SFTs}

The proof from the last section cannot be adapted to $\mathbb{Z}^{d}$-EDS
because, for $d\geq2$, one cannot decide in general if a $d$-dimensional
SFT is empty; this is Berger's theorem. Although this in itself is
not a proof that no $(d,k)$-universal SFTs exist for $k>1$, the
proof in fact involves showing that if one did exist then it could
be used as a component in an algorithm for deciding the emptiness
of SFTs, contradicting Berger's theorem. 

Although the proof that universal $\mathbb{Z}^{d}$-EDS don't exist
for $d\geq2$ is not conceptually difficult, it will be more transparent
to first establish the weaker claim that there are no $(d,d)$-universal
SFTs, i.e. no $\mathbb{Z}^{d}$-SFTs which factor onto every other
$\mathbb{Z}^{d}$-SFT. This follows easily from entropy considerations,
but the proof we give is of a recursive nature.

Fix $d$. It will be convenient to consider SFTs over alphabets which
are subsets of $\mathbb{N}$; this allows us to examine sets of SFTs
without restricting the alphabet size, and is no restriction since
any SFT is isomorphic to one over the alphabet N.

For a language $L$ over $\Sigma$, we say that a pattern $a\in\Sigma^{E}$
is $L$-admissible if, whenever $b\in\Sigma^{F}$ is a pattern in
$L$ and $F+u\subseteq E$, the pattern $b$ does not appear at $u$
in $a$; i.e. $a(u+v)\neq b(v)$ for some $v\in F$. 

We return to the question of $(d,d)$-universal SFTs. It is well-known
that the set \[
\mathcal{L}=\{L\,:\, L\mbox{ is a finite set of finite patterns over }\mathbb{N}\mbox{ and }X_{L}=\emptyset\}\]
is RE. To see this consider the algorithm that is given as input a
finite set $L$ over a finite $\Sigma\subseteq\mathbb{N}$, and iterates
over $n\in\mathbb{N}$; for each $n$ it checks if there exist $L$-admissible
$a\in\Sigma^{[-n;n]}$. If none exist it announces that $X_{L}=\emptyset$
and halts. Otherwise, is goes on to the next $n$. Clearly if $X_{L}\neq\emptyset$
then the algorithm will not halt, and a compactness argument shows
that if $X_{L}=\emptyset$ it will.

Thus, in order to prove the claim about non-existence of $(d,d)$-universal
SFTs, we show that, if there is some finite $L^{*}$ so that $X=X_{L^{*}}\subseteq\Sigma^{\mathbb{Z}^{d}}$
factors onto every $\mathbb{Z}^{d}$ SFT, then the set\[
\mathcal{M}=\{L\,:\, L\mbox{ is a finite set of finite patterns over }\mathbb{N}\mbox{ and }X_{L}\neq\emptyset\}\]
is RE. Since $\mathcal{L},\mathcal{M}$ are complementary in the space
of finite sets of patterns over $\mathbb{N}$, and both are RE, it
follows that they are recursive (lemma \ref{lem:RE-and-coRE-isR});
this would contradict Berger's theorem.

The following algorithm establishes that $\mathcal{M}$ is RE. As
input it accepts a finite set $L$ of patterns over $\mathbb{N}$
and decides if $X_{L}$ is empty. Recall that $L^{*}$ is assumed
to be a finite set of patterns so that $X_{L^{*}}$ factors onto every
$0$-entropy SFT; we shall use $L^{*}$ in constructing our algorithm.
Let $R\in\mathbb{N}$ be such that each pattern in $L^{*}$ and $L$
is supported in $[-R,R]^{d}$. 
\begin{algorithm*}
For each triple $r,k,\varphi$ with $r,k\in\mathbb{N}$, $r>R+k+1$,
and $\varphi:\Sigma^{[-k;k]^{d}}\rightarrow\Delta$, do
\begin{enumerate}
\item Enumerate all $L^{*}$-admissible patterns in $\Sigma^{[-r;r]^{d}}$.
Call them $a_{1},\ldots,a_{N}$
\item If $\varphi(a_{i})|_{[-R;R]^{d}}$ is $L$-admissible for every $a_{i}$,
$1\leq i\leq N$, announce that $X_{L}\neq\emptyset$ and halt.
\end{enumerate}
\end{algorithm*}
To see that this works, note that if the algorithm halts in (2) for
some triple $(r,k,\varphi)$ then the image under $\varphi$ of any
point in $X_{L^{*}}$ gives a point in $X_{L}$, implying that $X_{L}\neq\emptyset$.
Conversely, if $X_{L}\neq\emptyset$ then by universality of $X_{L^{*}}$
there is some $k$ and factor map $X_{L^{*}}\rightarrow X_{L}$ which
is defined locally by some $\varphi:\Sigma^{[-k;k]^{d}}\rightarrow\Delta$.
A compactness argument shows that for these $k,\varphi$, if the condition
in (2) fails for every $r$ then there is a point $x\in X_{L^{*}}$
with $\varphi(x)\notin X_{L}$, a contradiction.

We now deal with the general case.
\begin{proof}
(of theorem \ref{thm:main-negative}) The proof that there is no universal
EDS follows the same argument as above, the only difference being
that factor maps are no longer sliding block codes, which makes the
notation more cumbersome. If $(X,\mathbb{Z}^{d})$ is a totally disconnected
system and $\pi:X\rightarrow Y\subseteq\Sigma^{\mathbb{Z}^{d}}$ is
a factor map to a subshift $Y$, then the factor map is determined
by the partition \[
U_{\sigma}=\{x\in X_{L}\,:\, f(x)_{0}=\sigma\}\qquad\sigma\in\Sigma\]
of $X$ into closed and open sets, and conversely if $\{U_{\sigma}\}_{\sigma\in\Sigma}$
is such a partition then it defines a factor map $\pi$ into $\Sigma^{\mathbb{Z}^{d}}$,
where $(\pi x)(u)=\sigma$ if and only if $T^{u}x\in U_{\sigma}$.
Thus in order to adapt the proof above to general EDS we iterate over
partitions rather than sliding block codes; note that the finite partitions
of $\Omega$ into closed and open sets can be effectively enumerated.

Fix $d$ and suppose $L\subseteq\mathcal{P}^{*d}$ is an RE set and
that $\Omega_{L}$ factors onto every $\mathbb{\mathbb{Z}}^{d}$-EDS.
As before, we obtain a contradiction by showing that the set $\mathcal{L}'$
above is RE. 

Let $L_{n}$ be a recursive increasing sequence of sets with $\cup L_{n}=L$,
which exists since $L$ is RE. Consider the following algorithm, which
as input accepts a finite set $M$ of finite patterns over $\mathbb{N}$:
\begin{enumerate}
\item Let $R$ be an upper bound on the diameter of the patterns in $M$.
\item For each triple $r,n$ and $(U_{\sigma})_{\sigma\in\Sigma}$, with
$r,n\in\mathbb{N}$, $r>R$ and $(U_{\sigma})$ a closed and open
partition of $\Omega$, do:

\begin{itemize}
\item If for every $x\in\Omega$ the condition \[
\forall\left\Vert u\right\Vert \leq n\quad T^{u}x\notin\Omega\setminus\bigcup_{\overline{a}\in L_{n}}\bigcup_{\left\Vert u\right\Vert \leq n}T^{u}[\overline{a}]\]
implies that $f(x)|_{[R;R]^{d}}$ is $M$-admissible, announce that
$Y\neq\emptyset$ and halt.
\end{itemize}
\end{enumerate}
Although the condition in (a) is not phrased as a finite condition
and may appear hard to check, it is actually a question about the
non-emptiness of the intersection of finitely many cylinder sets which
are given in the data, and can therefore be effectively checked. The
proof that this algorithm produces the correct output is the same
as the one given above for SFTs, and we omit it.
\end{proof}

\section{\label{sec:Realization-of-effective-attractors}Realization of effective
attractors}

Let $U\subseteq\mathbb{R}^{d}$ be open and let $f:U\rightarrow U$
be an effective map with attractor $X\subseteq U$ as in definition
\ref{def:effective attractor}. We wish to show that there is an EDS
which extends $(X,f)$; theorem \ref{thm:realization-of-attractors}
then follows.

Let us say that a dyadic rational is one of the form $\frac{k}{2^{n}}$;
a dyadic interval is a closed interval of the form $[\frac{k}{2^{n}},\frac{k+1}{2^{n}}]$;
and by a dyadic cell is a product $I_{1}\times\ldots\times I_{d}$
of dyadic intervals. An open dyadic cell we mean the interior of a
dyadic cell, i.e. the product of open dyadic intervals. 

Given a binary representation $\overline{x}$ of $x\in\mathbb{R}$
let $D_{N}(\overline{x})$ denote the set of all $y\in\mathbb{R}$
whose first $N$ binary digits after the {}``decimal'' point agree
with $\overline{x}$; this is a closed dyadic interval of length $2^{-N}$.
For $x=(x_{1},\ldots,x_{d})\in\mathbb{R}^{d}$ and binary representations
$\overline{x}_{i}$ of $x_{i}$ we define $D_{N}(\overline{x})=D_{N}(\overline{x}_{i})\times\ldots\times D_{N}(\overline{x}_{d})$,
which is a dyadic cell with $x\in D_{N}(\overline{x})$. There are
at most $2^{d}$ binary representations of $(x_{1},\ldots,x_{d})$,
each giving rise to a dyadic cell containing $x$.
\begin{lem}
\label{lem:approximating-image-of-dyadic-cell}Let $f:U\rightarrow U\subseteq\mathbb{R}^{d}$
be an effective map. Then there is an algorithm which, given a closed
dyadic cell $D\subseteq U$ and an integer $n$, outputs a finite
set of rational points in $\mathbb{R}^{d}$ which are $\frac{1}{n}$-dense
in $f(D)$.\end{lem}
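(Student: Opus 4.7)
My plan is a recursive subdivision of $D$ into dyadic sub-cells, halted adaptively by a bit-counting criterion on the simulated effective algorithm for $f$. First I normalize $D$ so that all its coordinate intervals share a common depth $2^{-m_0}$ (subdividing larger intervals if necessary). I then maintain a queue of sub-cells, initially containing $D$. For each cell $E = \prod_i [j_i/2^m, (j_i+1)/2^m]$ popped from the queue, let $x_E = (j_1/2^m, \ldots, j_d/2^m)$ be its lower corner and let $\overline{x}_E$ be the ``trailing-zeros'' binary representation of $x_E$. I simulate the effective algorithm for $f$ on $\overline{x}_E$ with target precision $1/(3n)$; if it halts having read at most $m$ bits from each of the $d$ input streams, I accept its rational output $y_E$ into the output set $S$, and otherwise I subdivide $E$ into its $2^d$ sub-cells of depth $m+1$ and enqueue them.

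For correctness, every $z \in E$ admits a binary representation whose first $m$ bits in each coordinate agree with those of $\overline{x}_E$ (for dyadic $z \in \partial E$, use the representation with trailing ones). Since the algorithm reads at most $m$ bits per coordinate and these bits match $\overline{x}_E$, it behaves identically on both representations, so by the defining property of effective maps $\|f(z) - y_E\|_\infty \le 1/(3n)$. Hence $f(E) \subseteq B(y_E, 1/(3n))$, and $S$ is $1/n$-dense (in fact $1/(3n)$-dense) in $f(D)$.

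The main obstacle is termination. If the process fails to halt, K\"onig's lemma supplies an infinite nested chain $D \supseteq E_{m_0} \supset E_{m_0+1} \supset \ldots$ with $E_m$ of depth $m$, all enqueued cells having been subdivided --- so in particular $N_{E_m} > m$ for every $m$. These cells collapse to a single point $z^* \in D$. The key observation I would exploit is that the canonical representations $\overline{x}_{E_m}$ stabilize position-wise: passing from $E_m$ to $E_{m+1}$ merely appends one new bit to the corner's expansion in each coordinate, so for any fixed $k$ the first $k$ bits of $\overline{x}_{E_m}$ are constant for $m \ge k$. The sequence therefore converges bitwise to a specific binary representation $\overline{z^*}$ of $z^*$. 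By effectiveness, the algorithm on $\overline{z^*}$ at precision $1/(3n)$ reads only some finite number $N^*$ of bits; for $m \ge N^*$, the first $m \ge N^*$ bits of $\overline{x}_{E_m}$ agree with those of $\overline{z^*}$ in every coordinate, so the algorithm on $\overline{x}_{E_m}$ reads the same $N^*$ bits and halts with the same rational output. This gives $N_{E_m} = N^* \le m$, contradicting $N_{E_m} > m$. Extracting a stable bit-limit from the nested chain in this way is the heart of the argument; the rest is bookkeeping.
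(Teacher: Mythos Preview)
Your proof is correct. Both your approach and the paper's rest on the same key observation: if the effective algorithm, run on a binary representation $\overline{x}$ at precision roughly $1/n$, reads only $N$ digits before outputting its approximation, then that same output is a valid approximation of $f(z)$ for every $z$ having a representation that agrees with $\overline{x}$ in those first $N$ digits. The paper packages this into an open neighborhood $U_n(x)$ of each point $x$ (taking the maximum of $N$ over all $2^d$ binary representations of $x$), and then searches by brute force over finite collections $\{U_n(r_i)\}$ with $r_i$ dyadic until one is found that covers $D$; compactness of $D$ guarantees that this search terminates. You instead organize the same idea as an adaptive dyadic subdivision, using only the trailing-zeros representation of each cell's lower corner, and you replace the open-cover search by K\"onig's lemma on the subdivision tree together with the bitwise stabilization of the corner representations along an infinite branch.

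The two routes are equivalent compactness arguments. Yours is slightly more streamlined in that it needs only one representation per cell and avoids the side computation of deciding whether a given finite family of open dyadic boxes covers $D$; the paper's version, on the other hand, makes the open-cover structure explicit and so yields the neighborhoods $U_n(x)$ as a reusable byproduct.
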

\begin{proof}
Let $A$ be the algorithm given by the definition of an effective
function; for a point $x\in\mathbb{R}^{d}$ with binary representation
$\overline{x}$ and for given $n$ let $A(x,n)$ denote the approximation
of $f(x)$ produced by $A$ on inputs $\overline{x},n$, so $\left\Vert A(\overline{x},n)-f(x)\right\Vert <\frac{1}{n}$.
Let $N=N(\overline{x},n)$ denote the number of digits from $\overline{x}$
that is used by $A$ in computing $A(\overline{x},n)$. If $x'\in D_{N}(\overline{x})$
and $\overline{x}'$ is a binary representation of $x'\in\mathbb{R}^{d}$
agreeing with $\overline{x}$ for the first $N$ digits, then $A(\overline{x}',n)=A(\overline{x},n)$,
since the algorithm halts before having a chance to detect that $\overline{x}\neq\overline{x'}$.
Hence, for $x'\in D_{N}(\overline{x})$ we have $\left\Vert f(x')-A(\overline{x},n)\right\Vert =\left\Vert f(x')-A(\overline{x}',n)\right\Vert <\frac{1}{n}$,
and so $\left\Vert f(x')-f(x)\right\Vert <\frac{2}{n}$.

There are at most $2^{d}$ binary representations $\overline{x}$
of $x$, and for each we get an $N$ as above. Let $N^{*}=N^{*}(\overline{x},n)$
be the maximum of these numbers and let $U_{n}(x)$ be the interior
of the union of dyadic cells of side $2^{-N^{*}}$ containing $x$.
This is an open set containing $x$, and from the discussion above
we see that the diameter of $f(U_{n}(x))$ is $<\frac{5}{n}$, since
the image of each cell has diameter $<\frac{2}{n}$ and is within
distance $\frac{1}{n}$ of $f(x)$. Note that for a dyadic point $r\in\mathbb{Q}^{d}$,
both $N^{*}(r,n)$ and $U_{n}(r)$ are computable.

If $D\subseteq U$ is a closed dyadic cell and $n\in\mathbb{N}$,
then there is a finite cover of $D$ by sets of the form $U_{n}(r)$,
and such a cover can be computed by iterating over all finite collections
of the sets $U_{n}(r)$ for $r$ a dyadic rational, until such a collection
is found that covers $D$. If $\{U_{n}(r_{i})\}_{i=1}^{M}$ is such
a collection, then as we have seen, the set $\{A(\overline{r_{i}},n)\}_{i=1}^{M}$
is $\frac{5}{n}$-dense in $f(D)$, where $A(\overline{r},n)$ is
the output of the algorithm on input $n$ and the binary representation
$\overline{r}$ of $r$.\end{proof}
\begin{lem}
\label{lem:coplement-of-attractor-is-RE}Let $X$ be the attractor
of an effective map $f:U\rightarrow U\subseteq\mathbb{R}$. Then there
is an algorithm which, when given a dyadic cell $D=I_{1}\times\ldots\times I_{d}\subseteq U$
as input, halts if $X\cap D=\emptyset$, and otherwise runs forever.\end{lem}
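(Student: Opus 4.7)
The plan is to algorithmically enumerate certificates that $D$ is disjoint from the attractor. A certificate will consist of a finite union $W$ of open dyadic cubes with $\overline{W}\subseteq U$, together with integers $n,m\in\mathbb{N}$, such that Lemma~\ref{lem:approximating-image-of-dyadic-cell} can be used with numerical tolerance $\tfrac{1}{m}$ to verify: (i) $\overline{f(\overline{W})}\subseteq W$, so that $W$ is a trapping region for $f$, and (ii) $\overline{f^n(\overline{W})}\cap D=\emptyset$. If in addition one knows $X\subseteq W$, then the $f$-invariance of $X$ yields $X=f^n(X)\subseteq f^n(W)$, and so $X\cap D\subseteq\overline{f^n(\overline{W})}\cap D=\emptyset$.

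To carry out the verification effectively, cover $\overline{W}$ by its constituent closed dyadic cubes and invoke Lemma~\ref{lem:approximating-image-of-dyadic-cell} on each at precision $\tfrac{1}{m}$ to produce a finite set of rational points $\tfrac{1}{m}$-dense in $f(\overline{W})$. Condition (i) is then checked by requiring every such point to lie at distance greater than $\tfrac{2}{m}$ from the complement of $W$; iterating the procedure yields analogous $\tfrac{1}{m}$-nets in $\overline{f^k(\overline{W})}$ for $k\leq n$, and (ii) is checked by requiring the resulting net in $\overline{f^n(\overline{W})}$ to lie at distance greater than $\tfrac{2}{m}$ from $D$. The algorithm enumerates all triples $(W,n,m)$ and halts the first time a certificate passes both tests.

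For completeness, suppose $X\cap D=\emptyset$, so that $d(X,D)>0$. The existence of the trapping region $V$ implies asymptotic stability of $X$, and so there is a forward-invariant open $W^{*}\subseteq V$ containing $X$ with $\overline{W^{*}}\cap D=\emptyset$ and with $\overline{f^n(W^{*})}$ Hausdorff-close to $X$ for large $n$ (in particular disjoint from $D$ after $n$ iterates). Approximating $W^{*}$ from outside by a sufficiently fine finite union of open dyadic cubes $W$, and choosing $m$ large enough that the margins work out, produces a triple $(W,n,m)$ which the algorithm eventually examines and verifies.

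The hard part is the correctness direction: nothing in (i), (ii) alone prevents the algorithm from halting on a spurious $W$ which is a trapping region of some unrelated invariant set disjoint from $X$ and from $D$, in which case it could accept even though $X\cap D\neq\emptyset$. I would resolve this by using the fact that the algorithm is allowed to depend on the fixed effective attractor $(f,X,V)$, and hardwire into it a bounded dyadic cube $C_0$ with $X\subseteq C_0$ and $\overline{C_0}\subseteq V$ (which exists because $X$ is compact and $V$ is open). The enumeration is then restricted to $W\supseteq C_0$, making the condition $X\subseteq W$ automatic and hence the correctness argument of the first paragraph applicable.
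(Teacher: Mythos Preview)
Your argument and the paper's coincide once you reach your final paragraph: both hardwire a finite dyadic description of a set containing $X$ and contained in $V$, and then certify $D\cap X=\emptyset$ by checking that some forward iterate of that set misses $D$. The paper does this more directly. It fixes once and for all a finite union $C=C_1\cup\cdots\cup C_N$ of closed dyadic cells with $X\subseteq C\subseteq V$, observes that $X=\bigcap_n f^n(C)$ (since $X\subseteq f^n(C)\subseteq f^n(V)$ and $\bigcap_n f^n(V)=X$), and then simply iterates over $n$, each time using Lemma~\ref{lem:approximating-image-of-dyadic-cell} to produce a $\tfrac1n$-net in $f^n(C)$ and testing whether it lies at distance $>\tfrac1n$ from $D$. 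Your search over candidate trapping regions $W$ and the verification of condition~(i) become redundant once $C_0$ is hardwired: correctness uses only $X\subseteq W$, and completeness is obtained by taking $W$ equal to the hardwired set throughout (it already sits inside $V$, so its iterates stay in $U$ and converge to $X$). You correctly diagnosed the spurious-trapping-region issue and fixed it in the right way; the paper just skips the detour.

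One small gap: the single dyadic cube $C_0$ with $X\subseteq C_0$ and $\overline{C_0}\subseteq V$ need not exist (take $X$ a circle and $V$ a thin annular neighbourhood). Replace it by a finite union of closed dyadic cells, as the paper does; the rest of your argument goes through unchanged.
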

\begin{proof}
By assumption there is an open set $V\subseteq U$ with $\overline{fV}\subseteq V$
and $X=\cap f^{n}V$. By covering $X$ by small open dyadic cells
and then taking their closure, we see that there is a finite set of
closed dyadic cells $C_{1},\ldots,C_{N}$ which cover $X$ and are
contained in $V$. Denoting by $C$ their union, we have $X=\cap f^{n}C$.
Thus for any dyadic cell $D\subseteq U$ we have $D\cap X=\emptyset$
if and only if, for some $n$, $D\cap f^{n}C_{i}=\emptyset$ for $i=1,\ldots,N$.

We do not claim that $C$ can be found effectively but it exists and
can be described by finite data, and we may use it in the algorithm
that we now present. As input the algorithm takes a dyadic cell $D$.
It then iterates over $n$ and for each $n$ it computes a finite
set of points $F$ which are $\frac{1}{n}$-dense in $f(C)$ (this
can be done by the previous lemma). If every point in $f$ has distance
$>\frac{1}{n}$ from $D$ the algorithm halts; otherwise it proceeds
to the next $n$.
\end{proof}
In the same way we have
\begin{lem}
\label{lem:disjoint-images-is-RE}Let $X$ be the attractor of an
effective map $f:U\rightarrow U\subseteq\mathbb{R}$. Then there is
an algorithm which, given as input two (closed) dyadic cells $D',D''$,
halts $f(D')\cap D''=\emptyset$ and otherwise runs forever.
\end{lem}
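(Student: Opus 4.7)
The plan is to imitate Lemma \ref{lem:coplement-of-attractor-is-RE}, now applied directly to the image of the single cell $D'$ rather than to the intersection of iterates $f^{n}(C)$. Since $D' \subseteq U$ is compact (we assume this as in the preceding lemma) and $f$ is continuous on $U$, the set $f(D')$ is compact; consequently $f(D') \cap D'' = \emptyset$ is equivalent to $\mathrm{dist}(f(D'),D'')>0$, and my algorithm will detect this positive distance.

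Concretely, the algorithm iterates over $n = 1,2,\ldots$. At stage $n$, it invokes Lemma \ref{lem:approximating-image-of-dyadic-cell} to produce a finite rational set $F_{n} \subseteq \mathbb{Q}^{d}$ which is $\frac{1}{n}$-dense in $f(D')$ (rescaling $n$ in the subroutine to absorb the universal constant $5$ that appears there). It then checks whether every point of $F_{n}$ has distance strictly greater than $\frac{1}{n}$ from $D''$; this is a finite computation, since $D''$ is a dyadic box and the distance from a rational point to such a box is computable exactly in the sup-norm (or can be compared to the rational $\frac{1}{n}$ by squaring, in the Euclidean norm). If the test succeeds, the algorithm halts; otherwise it proceeds to stage $n+1$.

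Correctness splits into two cases. If $f(D') \cap D'' \neq \emptyset$, pick $y \in f(D') \cap D''$; at every stage $n$ there exists $z \in F_{n}$ with $\|z-y\| < \frac{1}{n}$, so $\mathrm{dist}(z,D'') < \frac{1}{n}$, the test fails, and the algorithm runs forever. Conversely, if $f(D') \cap D'' = \emptyset$, set $\delta := \mathrm{dist}(f(D'),D'') > 0$ and choose $n$ with $\frac{2}{n} < \delta$; then every $z \in F_{n}$ lies within $\frac{1}{n}$ of some point of $f(D')$ and therefore has distance at least $\delta - \frac{1}{n} > \frac{1}{n}$ from $D''$, so the algorithm halts at stage $n$ (or earlier). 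There is no real obstacle here, since once Lemma \ref{lem:approximating-image-of-dyadic-cell} is available the rest is elementary rational arithmetic, exactly in parallel with the proof of Lemma \ref{lem:coplement-of-attractor-is-RE}.
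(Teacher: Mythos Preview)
Your argument is correct and is precisely the approach the paper has in mind: the paper gives no separate proof of this lemma, writing only ``In the same way we have\ldots'', i.e.\ one repeats verbatim the algorithm from Lemma~\ref{lem:coplement-of-attractor-is-RE} with $f(D')$ in place of $f^{n}(C)$, using Lemma~\ref{lem:approximating-image-of-dyadic-cell} to produce the $\frac{1}{n}$-dense rational sample and then testing distance to $D''$. Your correctness analysis via the positive separation $\delta=\mathrm{dist}(f(D'),D'')$ is exactly what is required.
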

We can now prove theorem \ref{thm:realization-of-attractors}:
\begin{proof}
Let $f:U\rightarrow U$ be an effective map with attractor $X\subseteq U$.
Since $X$ is bounded and we can assume that $U$ is bounded, and
without loss of generality $U\subseteq[0,1]^{d}$. Let $K=\{0,1\}^{\mathbb{N}}$
be the Cantor set and $\pi:K\rightarrow[0,1]^{d}$ be given by $x\mapsto(x_{1},\ldots,x_{d})$
where\[
x_{i}=\sum_{n=0}^{\infty}2^{-n+1}x(dn+i)\]
Let $Y\subseteq\Omega=K^{\mathbb{Z}}$ be those points $\omega$ so
that \[
\pi(\omega(n))\in X\quad,\quad n\in\mathbb{Z}\]
and\[
\pi(\omega(n+1))=f(\pi(\omega(n)))\]
Clearly $Y$ is a subshift and $\pi:Y\rightarrow X$ is a factor map
from $Y$ to $X$. It remains to show that $Y$ is an effective subshift. 

Recall the definition of $\mathcal{P}^{*1}$ from section \ref{sec:Recursive-set-and-effective-dynamics}.
Note that if $a\in\{0,1\}^{\{1,2\ldots dk\}}$ then $\pi([a])$ is
a dyadic cell in $[0,1]^{d}$. Consider the set of generalized cylinder
sets \[
L=\left\{ \overline{a}\in\mathcal{P}^{*1}\,\left|\:\begin{array}{c}
\overline{a}:\{n,n+1\}\rightarrow\{0,1\}^{\{1,2\ldots dk\}}\mbox{ for some }n,k\in\mathbb{N}\\
\mbox{and either }f(\pi([\overline{a}(n)]))\cap\pi([\overline{a}(n+1)])=\emptyset\\
\mbox{or }\pi([\overline{a}(n)])\cap X=\emptyset\mbox{ or }\pi([\overline{a}(n+1)])\cap X=\emptyset\end{array}\right.\right\} \]
One may verify that $Y=\Omega_{L}$ ; the proof will be complete if
we show that $L$ is RE. In order to so this it suffices to show that
there is an algorithm which, given $\overline{a}:\{n,n+1\}\rightarrow\{0,1\}^{\{1,2\ldots dk\}}$,
halts if $\pi([\overline{a}(n)])\cap X=\emptyset$ or $\pi([\overline{a}(n+1)])\cap X=\emptyset$
or $f(\pi([\overline{a}(n)]))\cap\pi([\overline{a}(n+1)])=\emptyset$,
and otherwise runs forever. But this is a direct consequence of the
lemmas preceding this proof.
\end{proof}

\section{\label{sec:Open-Problems}Open Problems}

We conclude with a few problems which arise in connection with this
work. 

We have shown that universality cannot occur for effective $\mathbb{Z}^{d}$
systems. This can be rephrased as follows: If $d>2$ then for every
effective $\mathbb{Z}^{d}$-system $X$, there is a $\mathbb{Z}^{d}$-SFT
$Y$ so that $X$ does not factor onto $Y$. If we relax the effectiveness
requirement, a stronger result is true: under mild assumptions on
the original system $X$, there is a system disjoint from $X$ in
the sense of Furstenberg. 
\begin{problem*}
Given a minimal $\mathbb{Z}^{d}$-EDS, does there exists a $\mathbb{Z}^{d}$-EDS
disjoint from it? (one may of course ask this about other classes).
\end{problem*}
(minimality is required to avoid trivial counterexamples). The same
question may be asked for minimal SFTs.

The non-existence of universal EDS was demonstrated using purely recursion-theoretic
considerations. It is not clear how to handle some restricted interesting
classes of EDS. Of particular interest are the minimal SFTs, nontrivial
examples of which were constructed in \cite{M89} and more examples
follow from \cite{HM07}. We note that for SFTs, minimality implies
zero entropy, so entropy cannot rule out a universal minimal SFT.
We also note that, besides being an interesting class dynamically,
minimal SFTs have the additional feature that the set of patterns
appearing in a minimal SFT is recursive, and the extension problem
is decidable for them, i.e. given a locally admissible pattern one
can decide if it can be extended to an infinite configuration \cite{H07}.
Thus this class does not exhibit the recursive complexity of SFTs
in general. 
\begin{problem*}
Are there universal systems in the class of minimal SFTs? (and if
so, in what dimensions?)
\end{problem*}
Finally, a recursive product of effective systems is effective. One
can ask a related question about SFTs: 
\begin{problem*}
If a recursive product SFTs has finite entropy, can it be extended
to an SFT?
\end{problem*}
\bibliographystyle{plain}
\bibliography{bib}

\end{document}